\DeclareSymbolFontAlphabet{\mathbbl}{bbold}
	\newcommand{\AM}{\textcolor{black}}
\newtheorem{prop}{Proposition}[section]
\newtheorem{remn}{Remark}[section]
\numberwithin{equation}{section}
\newcommand{\R}{{\sf I\!R}}
\newcommand{\Z}{\mathbb{Z}}
\newcommand{\weak}{\rightharpoonup}
\newcommand{\norm}[1]{\left \| #1 \right \|}
\newcommand{\jump}[1]{\left \llbracket #1 \right \rrbracket} 
\newcommand{\vc}[1]{{\boldsymbol #1}}
\newcommand{\mean}[1]{\left \langle#1 \right \rangle} 
\newcommand{\ep}{\varepsilon}
\newcommand{\one}{\raisebox{0.05pt}{$\cdot$}}
\newcommand{\two}{\raisebox{0.5pt}{$:$}}
\newcommand{\three}{\raisebox{-1.5pt}{$\vdots$}}
\newcommand{\thmref}[1]{\makebox{Theorem~\ref{#1}}}
\newcommand{\secref}[1]{\makebox{Section~\ref{#1}}}
\newcommand{\propref}[1]{\makebox{Proposition~\ref{#1}}}
\newcommand{\figref}[1]{\makebox{Fig.~{\rm\ref{#1}}}}
\title{Effective medium theory for\\ second-gradient elasticity with chirality}
\author{Grigor Nika\footnote{Corresponding author: \url{grigor.nika@kau.se} } \hspace{0.5ex} and~Adrian Muntean} 
\affil{Dept. Mathematics \& Computer Science\\ Karlstad University \\
Universitetsgatan 2\\ 651 88 Karlstad, Sweden \protect\\ \url{grigor.nika@kau.se}\protect\\ \url{adrian.muntean@kau.se}}
\begin{document}

\maketitle

\begin{abstract}
We derive effective models from a heterogeneous second-gradient nonlinear elastic material taking into account chiral scale-size effects. Our classification of the effective equations depends on the hierarchy of four characteristic lengths: The size of the heterogeneities $\ell$, the intrinsic lengths of the constituents $\ell_{\rm SG}$ and $\ell_{\rm chiral}$, and the overall characteristic length of the domain ${\rm L}$. Depending on the different scale interactions between $\ell_{\rm SG}$, $\ell_{\rm chiral}$, $\ell$, and ${\rm L}$ we obtain either an effective Cauchy continuum or an effective second-gradient continuum. The working technique combines scaling arguments with the periodic homogenization asymptotic procedure. Both the passage to the homogenization limit and the unveiling of the correctors' structure rely on a suitable use of the periodic unfolding and related operators.
\end{abstract}

\textbf{MSC 2020: }74Q05, 74B20, \AM{35B27}, 35G35, 35G45, 35J58, 35Q74 

\textbf{Keywords: }second-gradient elasticity, scale-size effects, partial scale separation, chirality

%\newpage
%\tableofcontents
%\newpage

\section{Introduction}

Contemporary advancements and developments in additive manufacturing technology have led to a widespread adoption of materials with microstructure. Typical engineered materials with microstructure include ceramic matrix composites, fibre-reinforced polymers, and many other advanced functional materials. What these aforementioned materials have in common, from the point of view of applications, is their properties. Macroscopically, materials with a hierarchical microstructure may have vastly different characteristic properties than those of the underlying microstructure. Hence, by exploiting sophisticated microstructures we can design and produce, programmable macroscopic material behavior, e.g., low weight to strength ratio of panels, desired buckling modes of beams, programmable negative Poisson's ratio materials, etc..; see for instance the examples reported in \cite{Bilal17}, \cite{Schum15}, \cite{ANC22}, \cite{AnCoNi20}. 

Generalized continuum theories (compare, e.g., \cite{Toupin62}, \cite{MT62}, \cite{Toupin64}, \cite{Mindlin64}, \cite{ES64I}, \cite{ES64II}, \cite{Mindlin65}, \cite{Eringen66}, \cite{ME68}, \cite{Now72}) have been consistently applied to modelling of materials with microstructure, such as granular or fibrous materials, or materials with a lattice structure (as well as other non-simple material, see, e.g., \cite{mielke2020}). Generalized continuum theories are largely split into higher-gradient methods (e.g., second-gradient material~\cite{MT62},~\cite{Toupin64},~\cite{Mindlin64},\cite{Mindlin65},~\cite{ME68}, \cite{Duv70}) or higher order methods (e.g., Cosserat material~\cite{CC1909}~\cite{Lakes83}, \cite{Lakes86}, \cite{Lakes93}, \cite{Lakes95}, \cite{Eringen99}, \cite{RL17}). Both theories are general enough to quantitatively delineate higher-gradients or higher-order models that incorporate chirality and microstructural scale-size effects. Scale-size effects refer to the changes in behavior or characteristics of a structure as its size is altered. Plainly put, it means that things can behave differently or have different properties depending on their size (see \figref{fig:size_effects}). Chiral (or non-centrosymmetric) materials, on the other hand lack a center of symmetry; they are not invariant under inversion of coordinates transformation (see \cite{HPL16}). Chirality may be present at different scales in the material and is a characteristic of engineered materials containing twisted fibres, e.g., wire rope, cables and even biological filaments, e.g., DNA strands (see \cite{Healey02}). 

\begin{figure}[!tbh]
    \centering
    \includegraphics[scale=0.35]{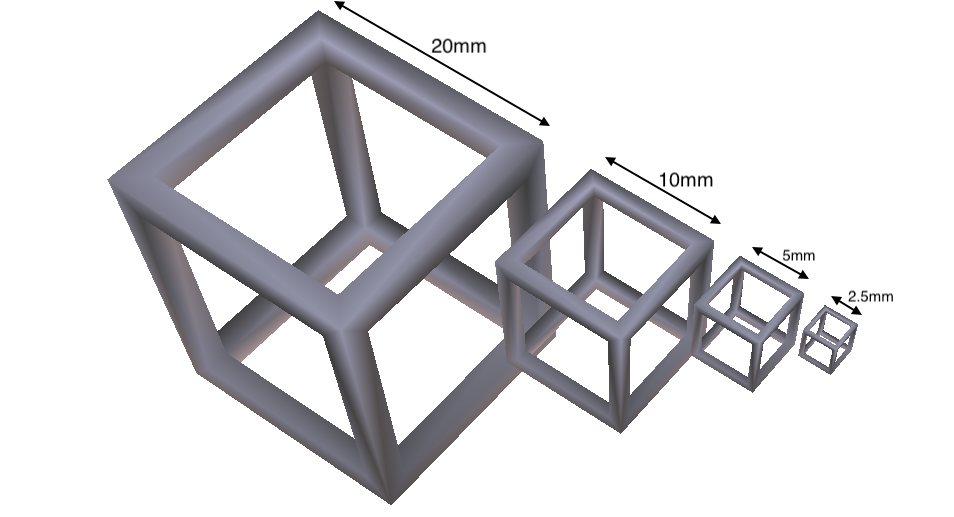}
    \caption{\small Scale-size effects highlight that the size or scale of a structure can influence its behavior, strength, as well as other properties. Within the domain of theoretical mechanics, conventional periodic homogenization theories rooted in the Cauchy continuum framework maintain their validity on the condition of pronounced scale separation. However, when the sizes of micro- and macro-structures converge, breaching the realm of comparability, these theories falter and succumb to the manifestation of size effects.}
    \label{fig:size_effects}
\end{figure}

Homogenization methods are particularly well suited for the analysis of heterogeneous materials with periodically distributed microstructures; for technical  details, we refer the reader for instance to ~\cite{Bens78}, \cite{SP80}, \cite{BP89}, \cite{All02book}, \cite{CioDo00}, \cite{MV10}. The technique of homogenization has been applied widely to derive effective equations, both of local and non-local nature, in mechanics, physics, chemistry, and in other natural sciences (see, e.g., \cite{bytner1988homogenization}, \cite{nika2023hyper}, \cite{Adrian_QAM}) since it can account for the influence of volume fraction, distribution, and morphology. Nevertheless, it is worth noting that the majority of models amenable to homogenization techniques adhere to the classical Cauchy material framework, which regrettably cannot capture scale-size effects due to the inherent size-independence of the classical elastic tensor. Furthermore, the aspect of chirality, a critical characteristic in certain materials, also remains unaddressed by classical Cauchy material. To circumvent this impasse, we propose a solution that entails the application of homogenization methods within an enriched continuum, thereby facilitating the incorporation of scale-size effects and the modeling of chirality. There are two potent ways of enriching the continuum: Allow higher gradients of the displacement field~\cite{TB96}, \cite{ZZA97}, \cite{SC2000} or allow additional degrees of freedom~\cite{FS98}, \cite{Forest01}, \cite{Forest13}. The current work focuses on the periodic homogenization within the confines of a linear approximation for a second-gradient nonlinear elastic material. The model proposed is sufficiently rich to model chiral-type microstructures and account for scale-size effects by means of dimensional analysis. In doing so, we rigorously derive two different classes of effective models: If the size of the heterogeneities is comparable with the period, then we obtain an effective classical Cauchy continuum. If the size of the heterogeneities is comparable with the overall length of the domain (no scale separation), then we obtain an effective second-gradient material. In the latter case, we recover the boundary conditions and the equilibrium equations for second-gradient theory as originally proposed in \cite{ME68}, \cite{Germain73}. Additionally, compared to the classical works in \cite{ME68}, \cite{Germain73}, we can now compute  explicitly the effective coefficients that characterize the material properties, taking into account volume fraction, particle distribution, and morphology. This is a novelty from the methodological point of view. Moreover, since we will be dealing with higher gradients, the choice of method to rigorously pass to the limit plays an important role. Certain techniques of homogenization lend themselves to be more easily exploited in dealing with higher-gradients than others. In this work we will use the method of periodic unfolding~\cite{CDG02, Dam05, CDG08, CDG18} instead of two-scale convergence \cite{Ngu89}, \cite{All92}, \cite{LNW02}. The unfolding method has a natural way of handling higher-gradients without any extra effort, as it was pointed out in the original work~\cite{CDG08}. Furthermore, the results presented here can be extended to domains with holes by adjusting the periodic unfolding operator as in \cite{CDDGZ12}.

To fix ideas, we designate an origin and the natural orthonormal basis in $\R^3$ and we choose the reference configuration to coincide with the natural or stress-free configuration. We denote by $\overline{\Omega}$ the region occupied in the reference configuration, which is the closure of a domain $\Omega \subset \R^3$ and we call $\overline{\Omega}$ the elastic body. We further, assume that the boundary $\Sigma {:=} \partial \Omega$ is sufficiently smooth. If $\vc{\psi}(\vc{x})$ is the deformation map then the material response of the elastic body is described by a stored energy {\rm W} that is a real-valued function of the deformation gradient $\mathbb{F} {:=} \nabla \vc{\psi}$\footnote{We have made the standard assumption that $\mathbb{F} \in \{ \mathbb{M} \in {\rm GL}(\R^3) \mid {\rm det}(\mathbb{M}) > 0 \}$ where ${\rm GL}(\R^3)$ represents the general linear group of order $3$ while the third order tensors will be symmetric in their first two indices. For a more detailed setting the reader can consult \cite{MaHea06}.} and the gradient of the deformation gradient $\mathbb{G}{:=} \nabla\nabla \vc{\psi}$. We denote by $\widetilde{\vc{u}}(\vc{x}){:=} \vc{\psi}(\vc{x}) - \vc{x}$ the displacement and assume that follows some scaling $\widetilde{\vc{u}}(\vc{x}){:=} \alpha \, \vc{u}(\vc{x})$, for some positive constant $\alpha$ (we clarify later on why where we make use of such an $\alpha$). Elementary calculations yield immediately, $\mathbb{F} {=} {\rm I} +  \alpha \nabla \vc{u}$, where ${\rm I}$ is the second order identity tensor.  

{\bf Notation:} To expedite the presentation of our results, here onwards we will make use of the following notation: We will use the Einstein summation for repeated indices unless otherwise stated. Moreover, we will use the symbols $\two$ and $\three$ to indicate second order contractions and third order contractions among tensors, respectively, while $\vc{\epsilon}_{ijk}$ will be the Levi-Civita permutation tensor.

The internal energy of the elastic body is given by, 

\begin{equation}
{\rm E}(\vc{\psi}) = \int_\Omega {\rm W}(\mathbb{F}, \mathbb{G}) d\vc{x},
\end{equation}

where the stored energy satisfies the principle of material objectivity\footnote{${\rm W}(\mathbb{Q}\mathbb{F}, \mathbb{Q}\mathbb{G}) = {\rm W}(\mathbb{F}, \mathbb{G})$ for all $\mathbb{Q} \in {\rm SO}(3)$, $\mathbb{G}$ symmetric in their first two components, and $\mathbb{F} \in \{ \mathbb{M} \in {\rm GL}(\R^3) \mid {\rm det}(\mathbb{M}) > 0) \}$}. The equilibrium equations are derived by computing the first variation of ${\rm E}(\vc{\psi})$ and equate it to the virtual work of some body force field $\vc{g}$ acting through an admissible variation \cite{MaHea06}. Integration by parts, then, gives,

\begin{equation}\label{eq:SG_elasticity}
-{\rm div} \left( \frac{ \partial {\rm W}(\mathbb{F}, \mathbb{G}) }{ \partial \mathbb{F}} - {\rm div} \frac{ \partial {\rm W}(\mathbb{F}, \mathbb{G}) }{ \partial \mathbb{G}} \right) = \vc{g} \text{ in } \Omega,
\end{equation}

Upon using the classical chain rule we can rewrite the above equation as follows,

\begin{equation}
\begin{split}
\mathsf{A}(\mathbb{F},\mathbb{G})[\nabla^{(4)} \psi] + \mathsf{S}_{ij}^{rpq}(\mathbb{F},&\mathbb{G})[\nabla^{(3)} \vc{\psi}] - \mathsf{K}(\mathbb{F},\mathbb{G})[\nabla \nabla \vc{\psi}] + \vc{b}(\nabla^{(3)} \vc{\psi}, \nabla \nabla \vc{\psi}) = \vc{g}
\end{split}
\end{equation}

where 

\begin{equation}
\begin{split}
\mathsf{A}(\mathbb{F},\mathbb{G})[\nabla^{(4)} \vc{\psi}]_i &:= \frac{\partial^2 {\rm W}}{\partial G_{pqr} \partial G_{ijk}} \frac{\partial^4 \psi_p}{\partial x_j \partial x_k \partial x_q \partial x_r},\\
\mathsf{S}(\mathbb{F},\mathbb{G})[\nabla^{(3)} \vc{\psi}]_i &:= -\frac{\partial^2 {\rm W}}{\partial G_{pqr} \partial F_{ij}} \frac{\partial^3 \psi_p}{\partial x_j \partial x_q \partial x_r} + \frac{\partial^2 {\rm W}}{\partial F_{pq} \partial G_{ijk}} \frac{\partial^3 \psi_p}{\partial x_j \partial x_k  \partial x_q},\\
\mathsf{K}(\mathbb{F},\mathbb{G})[\nabla \nabla \vc{\psi}]_i &:= \frac{\partial^2 {\rm W}}{\partial F_{pq} \partial F_{ij}} \frac{\partial^2 \psi_p}{\partial x_j \partial x_q},\\
b_i(\nabla^{(3)} \vc{\psi}, \nabla \nabla \vc{\psi}) &:= \left[\frac{\partial}{ \partial x_j} \frac{\partial^2 {\rm W}}{\partial G_{pqr} \partial G_{ijk}}\right] \frac{\partial^3 \psi_p}{\partial x_k  \partial x_q \partial x_r} + \left[\frac{\partial}{ \partial x_j} \frac{\partial^2 {\rm W}}{\partial F_{pq} \partial G_{ijk}}\right] \frac{\partial^2 \psi_p}{\partial x_k  \partial x_q}.
\end{split}
\end{equation}

Throughout, the work we assume that the uniform strong ellipticity condition, i.e., there exist positive (generic) constants $c_1$ and $c_2$ such that:

\begin{equation} \label{eq:ellipticity1}
c_1 |\vc{w}|^2 |\vc{q}|^4 \le \vc{w} \otimes \vc{q} \otimes \vc{q} \three \mathsf{A}(\mathbb{F}, \mathbb{G})[\vc{w} \otimes \vc{q} \otimes \vc{q}] \le c_2 |\vc{w}|^2 |\vc{q}|^4
\end{equation}

for all $\vc{w}, \vc{q} \in \R^3 - \{ \vc{0} \}$ and for all $(\mathbb{F}, \mathbb{G})$ with $\mathbb{G}$ symmetric in the first two components and $\mathbb{F} \in \{ \mathbb{M} \in {\rm GL}(\R^3) \mid {\rm det}(\mathbb{M}) > 0) \}$. Furthermore, at the reference state we assume that,

\begin{equation} \label{eq:ellipticity2}
c_1 |\vc{w}|^2 |\vc{q}|^2 \le \vc{w} \otimes \vc{q} \two \mathsf{K}(\mathbb{I}, \mathbb{0})[\vc{w} \otimes \vc{q}] \le c_2 |\vc{w}|^2 |\vc{q}|^2
\end{equation}

for all $\vc{w}, \vc{q} \in \R^3 - \{ \vc{0} \}$. Additionally, we assume that the tensor $\mathsf{S}$ belongs to ${\rm L}^\infty (\Omega, \R^{3 \times 3 \times 3 \times 3 \times 3})$.

We linearlize equation \eqref{eq:SG_elasticity} by carrying out a Taylor expansion of the stored energy $\rm W$ around the reference state\footnote{We have added a detailed derivation of the Taylor expansion in the appendix for the readers convenience} $(\mathbb{F}, \mathbb{G})=(\mathbb{I}, \mathbb{0})$ and we obtain the following classical linearlized equations of second-gradient elasticity,  

\begin{gather}
\begin{aligned} \label{eq:derived_model}
&{-}{\rm div}~\tau = \vc{g} \text{ in } \Omega, \\
&\tau {:=} \sigma {-} {\rm div}~\mu \text{ in } \Omega, 
\end{aligned}
\end{gather}

where the quantities $\sigma$ and $\mu$ are related to the deformation and the gradient of the deformation by the following constitutive laws:

\begin{equation} 
\sigma {=} \mathsf{K} \two \nabla \vc{u} {+} \mathsf{S} \three \nabla \nabla \vc{u}, \quad
\mu {=} \mathsf{A} \three \nabla \nabla \vc{u} {+} \mathsf{S} \two \nabla \vc{u},
\end{equation}

which is a mechanical constitutive law up to $\mathcal{O}(\alpha)$ in the expansion and where,

\begin{gather}
\begin{aligned}\label{eq:coeff_sym}
\mathsf{K} &{:=} \frac{ \partial^2 {\rm W} }{ \partial \mathbb{F} \partial \mathbb{F}} \left ( \mathbb{I}, \mathbb{0} \right ), \quad
\mathsf{S} {:=} \frac{ \partial^2 {\rm W} }{ \partial \mathbb{F} \partial \mathbb{G}} \left ( \mathbb{I}, \mathbb{0} \right ), \quad
\mathsf{A} {:=} \frac{ \partial^2 {\rm W} }{ \partial \mathbb{G} \partial \mathbb{G}} \left ( \mathbb{I}, \mathbb{0} \right ).
\end{aligned}
\end{gather}

\section{Background and set up of the problem}

\subsection{Dimensional analysis and scaling}

The elastic body $\overline{\Omega}$ is assumed to be periodic with period $\ell$ and with characteristic length $\rm L$. We define the dimensionless coordinates and displacement,  

\begin{equation} 
\vc{x}^* =  \frac{\vc{x}}{\rm L}, \quad \vc{u}^* (\vc{x}^*) = \frac{\vc{u}(\vc{x})}{\rm L}.
\end{equation}

Moreover, we define the following non-dimensional tensors:

\begin{align}
\mathcal{K} \mathsf{K}^* = \mathsf{K}, \quad \mathcal{S} \mathsf{S}^* = \mathsf{S}, \quad \mathcal{A} \mathsf{A}^* = \mathsf{A}.
\end{align}

where 

\begin{align}
\mathcal{K} {:=} \max_{\vc{z} \in Y_\ell} |\mathsf{K}(\vc{z})|, \quad \mathcal{S} {:=} \max_{\vc{z} \in Y_\ell} |\mathsf{S}(\vc{z}) |, \quad
\mathcal{A} {:=} \max_{\vc{z} \in Y_\ell} |\mathsf{A}(\vc{z})|,
\end{align}

with $Y_\ell {:=} (-\ell/2, \ell/2]^3$ the periodic cell characterizing the body $\Omega$, while $\tau^*{:=}\mathcal{K}^{-1} \tau$ will be the non-dimensional hyperstress.

In generalized continua, there are additional intrinsic lengths related to the microstructure of the material. We refer the reader to reference \cite{Aifantis11} for a modern review on the topic. Since we are interested in modelling chiral microstructures (reference \cite{Lakes01} addresses the modelling of chirality in elastic materials) we will focus our attention on an additional length scale related to chirality. Following the work of references \cite{Forest01}, \cite{Nika21}, \cite{nika2022cosserat} we introduce the subsequent length scales related to the microstructure of the material: 

\begin{equation}\label{scaling}
\mathcal{A} {:=} \mathcal{K} \, \ell^2_{\rm SG}, \quad \mathcal{S} {:=} \mathcal{K} \, \ell^{1/p}_{\rm SG} \, \ell^{1/p'}_{\rm chiral} \quad \text{ where } \quad \frac{1}{p}+\frac{1}{p'} = 1, \quad p, p' \in (1,\infty).
\end{equation}

The scaling \eqref{scaling} provides consistency in the sense that you cannot have chiral effects without having second-gradient effects. However, you can have second-gradient effects without chiral effects. The interplay between $\ell_{\rm SG}$ and $\ell_{\rm chiral}$ is related to the well-posedness of the model, specifically, coercivity. We address this issue in detail in subsequent sections.

The non-dimensional stress in \eqref{eq:derived_model} has the following form,

\begin{gather}
\begin{aligned}
\tau^* 
&{:=} \mathsf{K}^* \two \nabla^{*} \vc{u}^* {+} \left(\frac{\ell_{\rm chiral}}{\rm L}\right)^{1/p'} \left(\frac{\ell_{\rm SG}}{\rm L}\right)^{1/p} \mathsf{S}^{*} \three \nabla^{*}  \nabla^{*}  \vc{u}^* \\
&{-} {\rm div^*} \left( \left( \frac{\ell_{\rm SG}}{\rm L} \right)^2 \mathsf{A}^{*} \three \nabla^{*} \nabla^{*} \vc{u}^* {+} \left(\frac{\ell_{\rm chiral}}{\rm L}\right)^{1/p'} \left(\frac{\ell_{\rm SG}}{\rm L}\right)^{1/p} \mathsf{S}^{*} \two \nabla^{*} \vc{u}^{*} \right),
\end{aligned}
\end{gather}

where the material tensors $\mathsf{K}^*(\vc{x}^*) = \{ \mathsf{K}^*_{jik\ell} (\vc{x}*) \}_{j,i,k,\ell=1}^{3}$, $\mathsf{S}^*(\vc{x}^*)=\{\mathsf{S}_{ji}^{k\ell m*} (\vc{x}*) \}_{j,i,k,\ell,m=1}^{3}$, and $\mathsf{A}^*(\vc{x}^*)=\{\mathsf{A}_{ijk}^{n\ell m*} (\vc{x}*) \}_{j,i,k,n,\ell,m=1}^{3}$ are $Y^*$ periodic with,

\begin{equation}
Y^* {:=} \frac{\ell}{\rm L} Y, \quad Y {:=} \left (-\frac{1}{2}, \frac{1}{2} \right ]^3.
\end{equation}  

Thus, one can generate an $\ep$ periodic problem by defining the non-dimensional number $\ep$ as the ratio of $\ell / {\rm L}$ and let $\ep \to 0$ to obtain an effective medium. However, different cases ought to be considered depending on how the intrinsic length scales $\ell_{\rm chiral}$ and $\ell_{\rm SG}$ scale with $\ell$ and ${\rm L}$, respectively. Here we consider the cases,

\begin{align}
&\ell_{\rm SG} / {\rm L} \sim \ep \quad \text{ and } \quad \ell_{\rm chiral} / \ell \sim \ep^{p'} \label{eq:hs1} \tag{\rm HS 1} \\[1.em]
&\ell_{\rm SG} / {\rm L} \sim 1 \quad \text{ and } \quad \ell_{\rm chiral} / \ell \sim \ep^{p' - 1}.\label{eq:hs2} \tag{\rm HS 2}
\end{align}

We chose to work with the above scalings, primarily, because of their physical interpretation. The \eqref{eq:hs1} scaling indicates that the size of the heterogeneities are comparable to the order of the period \footnote{Recent numerical and experimental work has determined that a micro-to-macro length ratio of $/56$ is sufficient to have strict scale separation and ignore second-gradient effects (i.e. $\ell_{\rm SG} / {\rm L} \approx 1/56$) \cite{Mahmoud2023}.}. The \eqref{eq:hs2} scaling indicates that the size of the heterogeneities are comparable to the characteristic length of the overall domain. Moreover, the chirality scaling has a more general form. However, it cannot be chosen independently of $\ell_{\rm SG}$. The reason being, as we will show in the next section, well-posedness of the model. In our case, the chirality length is (at least) one order smaller compared to the length of second-gradient effects. Naturally, one could consider a different scaling than the one proposed above. We will not address other type of scaling here. Rather we will leave their treatment to future work. Finally, if confusion arises, henceforth, we will omit the $*$ notation for the sake of simplicity and expediency of presentation. 

\subsubsection{Scaling of the stress and hyperstress under {\rm HS 1}}

If $\ell_{\rm chiral}/{\rm \ell}=\ep^{p'}$ then $\ell_{\rm chiral}/{\rm L}=\ep^{p' + 1}$. Hence, the hyperstress becomes,

\begin{gather}
\begin{aligned}
\tau^\ep
{=} \mathsf{K}(\frac{\vc{x}}{\ep}) \two \nabla \vc{u}^\ep &{+} \ep^2 \mathsf{S}(\frac{\vc{x}}{\ep}) \three \nabla \nabla \vc{u}^\ep 
{-} {\rm div} \left( \ep^2 \mathsf{A}(\frac{\vc{x}}{\ep}) \three \nabla \nabla \vc{u}^\ep {+} \ep^2 \mathsf{S}(\frac{\vc{x}}{\ep}) \two \nabla \vc{u}^\ep \right),
\end{aligned}
\end{gather}

where 

\begin{equation}
\sigma^\ep {=} \mathsf{K}(\frac{\vc{x}}{\ep}) \two \nabla \vc{u}^\ep {+} \ep^2 \mathsf{S}(\frac{\vc{x}}{\ep}) \three \nabla \nabla \vc{u}^\ep 
\end{equation} 

and 

\begin{equation}
\mu^\ep {=} \ep^2 \mathsf{A}(\frac{\vc{x}}{\ep}) \three \nabla \nabla \vc{u}^\ep {+} \ep^2 \mathsf{S}(\frac{\vc{x}}{\ep}) \two \nabla \vc{u}^\ep.
\end{equation}

\subsubsection{Scaling of the stress and hyperstress under {\rm HS 2}}

If $\ell_{\rm SG}/{\rm L}=1$ and $\ell_{\rm chiral}/\ell=\ep^{p' - 1}$, then $\ell_{\rm chiral}/{\rm L}=\ep^{p'}$. Hence, the hyperstress becomes,

\begin{gather}
\begin{aligned}
\tau^\ep
{=} \mathsf{K}(\frac{\vc{x}}{\ep}) \two \nabla \vc{u}^\ep &{+} \ep \mathsf{S}(\frac{\vc{x}}{\ep}) \three \nabla \nabla \vc{u}^\ep 
{-} {\rm div} \left( \mathsf{A}(\frac{\vc{x}}{\ep}) \three \nabla \nabla \vc{u}^\ep {+} \ep \mathsf{S}(\frac{\vc{x}}{\ep}) \two \nabla \vc{u}^\ep \right),
\end{aligned}
\end{gather}

where 

\begin{equation}
\sigma^\ep {=} \mathsf{K}(\frac{\vc{x}}{\ep}) \two \nabla \vc{u}^\ep {+} \ep \mathsf{S}(\frac{\vc{x}}{\ep}) \three \nabla \nabla \vc{u}^\ep 
\end{equation} 

and 

\begin{equation}
\mu^\ep {=} \mathsf{A}(\frac{\vc{x}}{\ep}) \three \nabla \nabla \vc{u}^\ep {+} \ep \mathsf{S}(\frac{\vc{x}}{\ep}) \two \nabla \vc{u}^\ep.
\end{equation}

\section{The microscopic model}

We consider an elastic body with periodic microstructure of period $\varepsilon$ occupying a region $\Omega \subset \R^3$. The region $\Omega$ that the body occupies, is assumed to be a uniformly Lipschitz open set (see \cite[Definition 2.65]{DD12}). $Y=(-1/2,1/2]^3$ is the unit cube in $\R^3$, and $\Z^3$ is the set of all $3$--dimensional vectors with integer components. For every positive $\ep$, let $N_\ep$ be the set of all points $\kappa \in \mathbb{Z}^3$ such that $\ep(\kappa+Y)$ is strictly included in $\Omega$. Denote by $T$ be the closure of an open subset in $Y$ with Lipschitz boundary and by $T^\ep_\kappa {:=} \ep (\kappa + T)$ will represent the region containing one of the material phases (see \figref{fig:decomposition}). Hence, we can define the following subsets of $\Omega$:

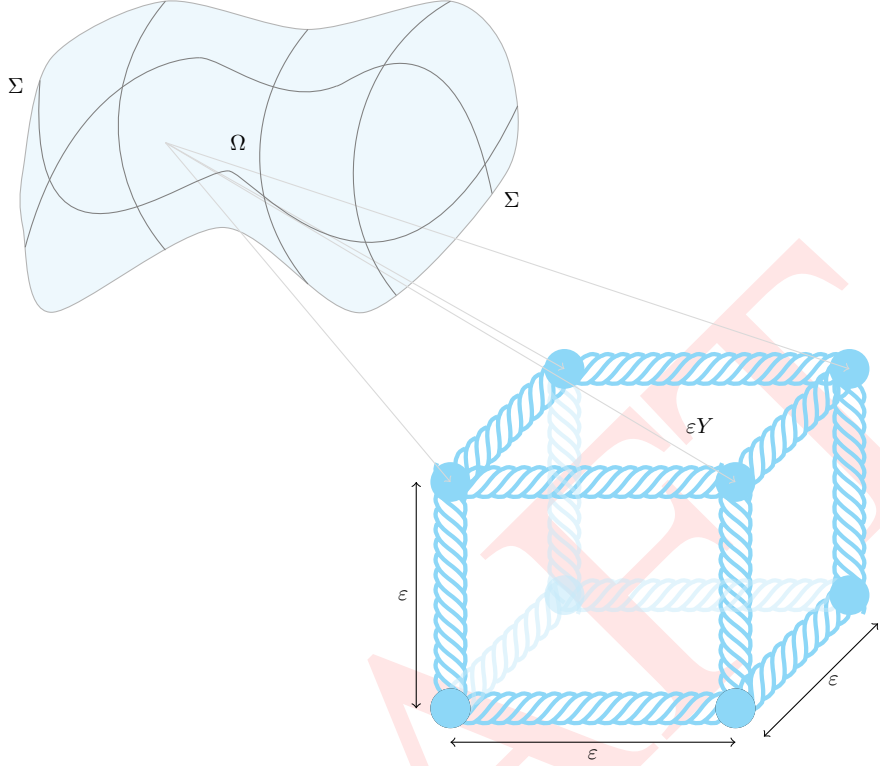
\begin{figure}[!tbh]
\centering
\begin{tikzpicture}[scale=0.75, transform shape, rotate=0, decoration=rope]

%%%% Domain in reference configuration
\draw [fill=cyan!20, opacity=0.3] plot[smooth cycle] coordinates {(-3.5,0.5) (-3,2.5) (-1,3.5) (1.5,3) (4,3.5) (5,2.5) (5,0.5) (2.5,-2) (0,-0.5) (-3,-2) (-3.5,-0.5)};

%\draw (10,1) node[right] {$\mathbb{R}^d$}; 
\draw (0,1) node[right] {$\Omega$}; 
\draw (4.8,0) node[right] {$\Sigma$}; 
\draw (-3.9,2) node[right] {$\Sigma$}; 
 
  %\draw[gray,thick,decorate] (0,0) to (4,-4);
  \draw[cyan!40,ultra thick,decorate,rope width=10pt] (4,-5) to (9,-5);
  \draw[cyan!40,ultra thick,decorate,rope width=10pt] (6,-3) to (11,-3);
  \draw[cyan!40,ultra thick,decorate,rope width=10pt] (4,-5) to (6,-3);
  \draw[cyan!40,ultra thick,decorate,rope width=10pt] (9,-5) to (11,-3);  

  \draw[cyan!40,ultra thick,decorate,rope width=10pt] (4,-9) to (9,-9);
  \draw[cyan!40,ultra thick,decorate,rope width=10pt] (4,-9) to (4,-5);
  \draw[cyan!40,ultra thick,decorate,rope width=10pt] (9,-9) to (9,-5);

  \draw[cyan!40,ultra thick,decorate,rope width=10pt] (11,-3) to (11,-7);
  \draw[cyan!40,ultra thick,decorate,rope width=10pt] (9,-9) to (11,-7);
  
  \draw[cyan!20,ultra thick,decorate,rope width=10pt, opacity=0.5] (6,-3) to (6,-7);
  \draw[cyan!20,ultra thick,decorate,rope width=10pt, opacity=0.5] (6,-7) to (11,-7);
  \draw[cyan!20,ultra thick,decorate,rope width=10pt, opacity=0.5] (6,-7) to (4,-9);
   
  \fill[cyan!40]  (4,-5) circle (10pt) (9,-5) circle (10pt);
  \fill  (4,-9) circle (10pt) (9,-9) circle (10pt);
  \fill[cyan!40]  (4,-9) circle (10pt) (9,-9) circle (10pt);
  \fill[cyan!40]  (6,-3) circle (10pt) (11,-3) circle (10pt);
  \fill[cyan!40]  (11,-7) circle (10pt);
  \fill[cyan!20, opacity=0.6] ( 6,-7) circle (10pt);

\draw [->,black!15](-1, 1.0) -- (4, -5);
\draw [->,black!15](-1, 1.0) -- (9,-5);
\draw [->,black!15](-1, 1.0) -- (6,-3);
\draw [->,black!15](-1, 1.0) -- (11,-3);

\draw [<->] (4,-9.6) to (9,-9.6);
\draw (6.5,-9.6) node[below] {$\varepsilon$};

\draw [<->](9.5,-9.5) -- (11.5,-7.5);
\draw (10.5,-8.5) node[right] {$\varepsilon$};

\draw [<->](3.4,-9) -- (3.4,-5);
\draw (3.4,-7.0) node[left] {$\varepsilon$};

\draw (8,-4.0) node[right] {$\varepsilon Y$};

% Drawing the contours to make body appear 3D
\draw[black!50] (-3.46, -0.85) .. controls (-2.7,2.0) and (-1.0,2.5).. (-0.4,2.5);
\draw[black!50] (-0.4,2.5) .. controls (-0.0,2.5) and (1,1.6) .. (2,2);
\draw[black!50] (2,2) .. controls (2.2,2) and (4,3.7) .. (4.73, 0.1);

\draw[black!50] (-3.2, 2.1) .. controls (-3.5,-2.0) and (-0.6,0.5).. (0.1,0.5);
\draw[black!50] (0.1,0.5) .. controls (0.5,0.5) and (3.0,-3.0) .. (5.18,1.65) ;

%vertical
\draw[color=black!50] (-1,3.5) to [bend right=40] (-1,-0.9);
\draw[color=black!50] (1.5,3) to [bend right=40] (1.5,-1.5);
\draw[color=black!50] (4,3.5) to [bend right=50] (3.05,-1.7);

%%Draw thin grid lines with color 40% gray + 60% white
%\draw [step=1,thin,gray!30] (-6,-10) grid (15,15);
%
%%Draw x and y axis lines
%\draw [->,gray!90] (-11,0) -- (11,0) node [below] {$x$};
%\draw [->,gray!90] (0,-11) -- (0,11) node [left] {$y$};
\end{tikzpicture}
\caption{\small Schematic of the domain $\Omega$ with a (possible) helical type microstructure. One can imagine the helical microstructure re-enforcing the interior of the unit cell which is filled with a ``weak" material where we have assumed perfect transmission conditions across the interphase. Second-gradient elasticity allows for the modelling of domains with helical type microstructures, where they respond to compression by twisting.}
\label{fig:decomposition}
\end{figure}

\[
\Omega_{1\ep} {:=} \displaystyle\bigcup_{\kappa \in N_\ep} T^\ep_\kappa \ ,\quad \Omega_{2\ep} {:=} \Omega \backslash \Omega_{1\ep}, \quad \Omega {:=} \Omega_{1\ep} \cup \Omega_{2\ep}.
\]

The exterior boundary component will be denoted by $\Sigma {:=} \partial \Omega$. We decompose $\Sigma {:=} \Sigma_{\rm 0} \cup \Sigma_{\rm 1}$ with $\Sigma_{\rm 0} \cap \Sigma_{\rm 1} = \emptyset$ a.e.. The vector $\vc{n}$ will be the unit normal on $\Sigma$, pointing in the outward direction. Moreover, thermodynamic stability bounds require that the tensors $\mathsf{K}$, $\mathsf{A}$, and $\mathsf{S}$ possess major symmetries (indicated by the structure of the coefficients in \eqref{eq:coeff_sym}). Furthermore, in addition to the conditions imposed by equations \eqref{eq:ellipticity1} and \eqref{eq:ellipticity2}, we assume that $\mathsf{K}_{jikl} \in {\rm L}^\infty (Y)$, $\mathsf{A}_{ijk}^{n \ell m} \in {\rm L}^\infty (Y)$, $\mathsf{S}_{ji}^{k \ell m } \in {\rm L}^\infty (Y)$ are bounded, measurable functions that can be extended as $Y$-periodic functions to the entirety of $\mathbb{R}^3$ while we reserve the notation for the coefficients, 

\begin{equation}
\mathsf{K} (\frac{\vc{x}}{\ep}) = \mathsf{K} (\vc{y}), \quad \mathsf{S} (\frac{\vc{x}}{\ep}) = \mathsf{S} (\vc{y}), \quad \mathsf{A} (\frac{\vc{x}}{\ep}) = \mathsf{A} (\vc{y})
\end{equation}

where $\vc{y} = \vc{x} / \ep$. In case of isotropy, the above tensors take the following form (see, e.g., \cite{SCh00}, \cite{SDCh01}, \cite{DIso09}),

\begin{equation}
\mathsf{K}_{ijkl} = \lambda \delta_{ij}\delta_{kl} + \mu (\delta_{ik}\delta_{jl} + \delta_{il}\delta_{jk}),     
\end{equation}

\begin{equation}
\mathsf{S}^{klp}_{ji} =
C_8 (\vc{\epsilon}_{ikl}\delta_{jp} + \vc{\epsilon}_{ikp}\delta_{jl} + \vc{\epsilon}_{jkl}\delta_{ip} + \vc{\epsilon}_{jkp}\delta_{il}),      
\end{equation}

\begin{gather}
\begin{aligned}
\mathsf{A}_{ijk}^{lpq} 
=
&
C_3 (\delta_{ij}\delta_{kl}\delta_{pq} + \delta_{ij}\delta_{kp}\delta_{ql} + \delta_{ik}\delta_{jq}\delta_{lp} + \delta_{iq}\delta_{jk}\delta_{lp}) + C_4 \delta_{ij}\delta_{kq}\delta_{lp} \\
+
&
C_5 (\delta_{ik}\delta_{jl}\delta_{pq} + \delta_{ik}\delta_{jp}\delta_{lq} + \delta_{il}\delta_{jk}\delta_{pq} + \delta_{ip}\delta_{jk}\delta_{lq}) + C_6 (\delta_{il}\delta_{jp}\delta_{kq} + \delta_{ip}\delta_{jl}\delta_{kq})\\  
+ 
&
C_7 (\delta_{il}\delta_{jq}\delta_{kp} + \delta_{ip}\delta_{jq}\delta_{kl} + \delta_{iq}\delta_{jl}\delta_{kp} + \delta_{iq}\delta_{jp}\delta_{kl}).     
\end{aligned}
\end{gather}

\subsection{Auxiliary formulas}

For the readers convenience and for the expediency of the our results, we introduce certain formulas that we will make use of in what follows. These formulas can also be found in \cite[Appendix]{Germain73}.

For any sufficiently smooth scalar function $\xi$ defined on $\Sigma$ or on a neighborhood of $\Sigma$ the tangential and normal components of $\nabla\xi$ are,

\begin{equation} \label{eq:grad_decomp}
(\nabla \xi)_\tau {=} - \vc{n} \times (\vc{n} \times \nabla \xi) {=} \nabla \xi - (\nabla \xi)_n \vc{n}, \quad (\nabla \xi)_n {:=} \nabla \xi \cdot \vc{n}.
\end{equation}

Moreover, we introduce the surface gradient of $\xi$ using the projection operator $\Pi {:=} {\rm I} - \vc{n} \otimes \vc{n}$. 

\[
\nabla_s \xi {=} ({\rm I} - \vc{n} \otimes \vc{n}) \nabla \xi {=} \Pi \nabla \xi.
\]

Thus, we can write down a useful integration by parts on surfaces formula,

\begin{equation} \label{eq:surface_parts}
\int_\Sigma \nabla_s \xi \,ds = \int_\Sigma \xi ({\rm div}\vc{n}) \vc{n} \, ds + \int_{\partial \Sigma} \jump{\xi \vc{\nu}} \, d\ell,
\end{equation}

where 

\[
\nu_i=\vc{\epsilon}_{ijk} t_j n_k, \quad i=1,2,3,
\]

is a component of the unit normal vector on $\partial \Sigma$ and tangent to $\Sigma$, $t_j$ is a component of the unit tangent vector to $\partial \Sigma$. Lastly, we remark, the jump term on \eqref{eq:surface_parts} is on a ridge, i.e., the line on $\Sigma$ where the tangent plane of $\Sigma$ is discontinuous. The above formulas are used with a high degree of frequency in emulsions and capillary fluids (see, e.g., \cite{NVDCDS16}). We refer the reader to the appendix of reference \cite{Germain73}, \cite{Germain73II} for an excellent exposition of the above formulae and related topics.

Using the above formulas and notation, the heterogeneous medium is then be characterized by the following system (written component-wise) for $i=1,2,3$:

\begin{gather}
\begin{aligned} \label{eq:model}
-&\partial_{x_j}\tau^\ep_{ij} = g_i &\text{ in }& \Omega, \\
&\tau^\ep_{ij} = \sigma^\ep_{ij} - \partial_{x_k} \mu^\ep_{ijk} &\text{ in }& \Omega, \\
&( \sigma^\ep_{ij} - \partial_{x_k} \mu^\ep_{ijk} ) n_j - \Pi_{q\ell} \partial_{x_\ell} ( \mu^\ep_{ijk} n_k \Pi_{qj} ) = 0 &\text{ on }& \Sigma_{\rm 1},\\
&\mu^\ep_{ijk} n_k n_j = 0 &\text{ on }& \Sigma_{\rm 1},\\
&u^\ep_i = 0 &\text{ on }& \Sigma_{\rm 0},\\
&\frac{\partial u^\ep_i}{\partial \vc{n}}=0 &\text{ on }& \Sigma_{\rm 0},\\
&\jump{\mu^\ep_{ijk} n_k \nu_j} = 0 &\text{ on }& \partial \Sigma_1,
\end{aligned}
\end{gather} 

where $g_i$ is a component some appropriately scaled body force that belongs in ${\rm L}^2(\Omega)$ and $\nu_i$ is a component of the outward unit normal to $\partial \Sigma$, for $i=1,2,3$. 

Given that the boundary conditions for a second-gradient material are not as conventional as the boundary conditions for a classical Cauchy material we write out explicitly what mechanical forces they represent on the elastic body following references \cite{Germain73, Germain73II}. Thus, besides the classical homogeneous Dirichlet boundary condition, we also have:

\begin{itemize}
\item[-] Surface traction: $( \sigma^\ep_{ij} - \partial_{x_k} \mu^\ep_{ijk} ) n_j - \Pi_{q\ell} \partial_{x_\ell} ( \mu^\ep_{ijk} n_k \Pi_{qj}),$
\item[-] A normal double traction: $\mu^\ep_{ijk} n_k n_j,$
\item[-] A line traction: $\jump{\mu^\ep_{ijk} n_k \nu_j}.$
\end{itemize}

\subsection{Variational formulation}

The primary setting for this work is the Sobolev space ${\rm H}^2(\Omega, \R^3)$, the space of functions $\vc{u} : \Omega \mapsto \R^3$ such that each coordinate is twice weakly differentiable and all the first and second partial derivatives are in ${\rm L}^2(\Omega)$ and the subspace ${\rm H}^2_{\Sigma_{\rm 0}}(\Omega, \R^3)$ which consists functions that vanish along with their derivatives on the part of the boundary of $\Sigma$, $\Sigma_{\rm 0}$ (see, e.g. \cite{adams2003sobolev}).  

The space ${\rm H}^2(\Omega, \R^3)$ is a Hilbert space with norm,

\begin{equation} \label{eq:norm_1}
\norm{\vc{u}}_{{\rm H}^2(\Omega, \R^3)} = \left( \norm{\vc{u}}^2_{\rm L^2(\Omega, \R^3)} + \norm{\nabla \vc{u}}^2_{\rm L^2(\Omega, \R^{3\times3})} +\norm{\nabla \nabla \vc{u}}^2_{\rm L^2(\Omega, \R^{3\times3\times3})} \right)^{1/2}.
%\sum_{|\alpha| \le 2} \norm{\partial^\alpha \vc{u} }^2_{{\rm L}^2(\Omega)} \right)^{1/2},
\end{equation}

Hence, if we multiply \eqref{eq:model} by $\vc{v} \in \{ C^\infty(\overline{\Omega}, \R^3) \mid \vc{v} = 0, \,\, \nabla \vc{v} = 0 \text{ on } \Sigma_{\rm 0} \}$ and integrate by parts, then we obtain:

\begin{equation}
{-}\int_{\Sigma_{\rm 1}} (\sigma^\ep_{ij} {-} \partial_{x_k} \mu^\ep_{ijk}) n_j v_i \, ds + \int_{\Omega} (\sigma^\ep_{ij} {-} \partial_{x_k}\mu^\ep_{ijk}) \partial_{x_j}v_i \, d\vc{x} = \int_\Omega g_i v_i \, d\vc{x}.
\end{equation}

A second integration by parts of the second term on the second integral gives,

\begin{gather}
\begin{aligned}
{-}\int_{\Sigma_{\rm 1}} (\sigma^\ep_{ij} {-} \partial_{x_k} \mu^\ep_{ijk}) n_j v_i \, ds &+ \int_{\Omega} \sigma^\ep_{ij} \partial_{x_j} v_i \, d\vc{x}\\
&+ \int_{\Omega} \mu^\ep_{ijk} \partial^2_{x_j x_k} v_i \, d\vc{x} - \int_{\Sigma_{\rm 1}} \mu^\ep_{ijk} n_k \partial_{x_j} v_i \, ds = \int_\Omega g_i v_i \, d\vc{x}.
\end{aligned}
\end{gather}

The last term on the left hand side of the above equation requires a second integration by parts. However, we first decompose it into its normal and tangential component (see equation \eqref{eq:grad_decomp}) as follows,

\begin{equation}
\int_{\Sigma_{\rm 1}} \mu^\ep_{ijk} n_k \partial_{x_j} v_i \, ds = \int_{\Sigma_{\rm 1}} \mu^\ep_{ijk} n_k n_j n_l \partial_{x_l} v_i \, ds + \int_{\Sigma_{\rm 1}} \mu^\ep_{ijk} n_k \Pi_{lj} \partial_{x_l} v_i \, ds 
\end{equation}

A second integration by parts on surfaces (see equation \eqref{eq:surface_parts}) for the last term on the right hand side of the above equation gives,

\begin{gather}
\begin{aligned}
\int_{\Sigma_{\rm 1}} \mu^\ep_{ijk} n_k \Pi_{lj} \partial_{x_l} v_i \, ds 
&= \int_{\Sigma_{\rm 1}} ( \mu^\ep_{ijk} n_k \Pi_{qj} ({\rm div}~\vc{n}) n_q - \Pi_{ql}\partial_{x_l} ( \mu^\ep_{ijk} n_k \Pi_{qj} ) ) v_i \, ds \\
&- \int_{\partial \Sigma_{\rm 1}} \jump{\mu^\ep_{ijk} n_k \nu_j v_i} d\ell.
\end{aligned}
\end{gather}

We remark immediately,

\begin{equation}
\mu^\ep_{ijk} n_k \Pi_{qj} ({\rm div}~\vc{n}) n_q = (\mu^\ep_{ijk} n_k n_j - \mu^\ep_{ijk} n_k n_q n_j n_q) ({\rm div}~\vc{n}) = 0. 
\end{equation}

Hence, using a density argument, the variational formulation of \eqref{eq:model} is: Find $\vc{u}^\ep \in {\rm H}^2_{\Sigma_{\rm 0}}(\Omega, \R^3)$ such that,

\begin{equation}\label{eq:weak}
\int_{\Omega} \sigma^\ep_{ij} \partial_{x_j} v_i \, d\vc{x} + \int_{\Omega} \mu^\ep_{ijk} \partial^2_{x_j x_k} v_i \, d\vc{x} = \int_\Omega g_i v_i \, d\vc{x},
\end{equation}

for all $\vc{v} \in {\rm H}^2_{\Sigma_{\rm 0}}(\Omega, \R^3)$. 

\subsection{Existence and uniqueness}

Denote by,

\begin{equation}
{\rm B}[\vc{u}^\ep, \vc{v}] {:=} \int_{\Omega} \sigma^\ep_{ij} \partial_{x_j} v_i \, d\vc{x} + \int_{\Omega} \mu^\ep_{ijk} \partial^2_{x_j x_k} v_i \, d\vc{x}.
\end{equation}

The form $\rm B$ is evidently a bilinear form that is continuous in the weak topology of ${\rm H}^2 \times {\rm H}^2$ and it remains to show coercivity in order to apply the Lax-Milgram theorem. 

\subsubsection{Coercivity in {\rm HS} 1}

Using the strong ellipticity conditions in \eqref{eq:ellipticity1} and \eqref{eq:ellipticity2} together with Cauchy's inequality with $\delta$ we obtain,

\begin{gather}
\begin{aligned} \label{eq:HS1_coercive}
\kappa_1 \ep^2 \norm{\nabla\nabla\vc{u}^\ep}_{\rm L^2(\Omega, \R^{3\times3\times3)}}^2 
+ 
&
c_1 \norm{\nabla\vc{u}^\ep}_{\rm L^2(\Omega, \R^{3\times3})}^2 \\
\le 
&
{\rm B}[\vc{u}^\ep, \vc{u}^\ep] - 2 \ep^2 \int_\Omega  \mathsf{S}^{klm}_{ij}(\frac{\vc{x}}{\ep}) \frac{\partial^2 u^\ep_k}{\partial x_m \partial x_l} \frac{\partial u^\ep_i}{\partial x_j} \, d\vc{x} \\
\le 
&
{\rm B}[\vc{u}^\ep, \vc{u}^\ep] + 2 \ep^2 \int_\Omega  |\nabla\nabla\vc{u}^\ep| | \nabla\vc{u}^\ep| \, d\vc{x} \\
\le 
&
{\rm B}[\vc{u}^\ep, \vc{u}^\ep] + 2 \ep^2 \delta \norm{\nabla\nabla\vc{u}^\ep}^2 + \frac{\ep^2}{2\delta} \norm{\nabla\vc{u}^\ep}^2.
\end{aligned}
\end{gather}

Thus, 

\begin{equation} \label{eq:HS1_bound}
(\kappa_1 - 2 \delta) \ep^2 \norm{\nabla\nabla\vc{u}^\ep}_{\rm L^2(\Omega, \R^{3\times3\times3})}^2 + (c_1- \frac{\ep^2}{2\delta}) \norm{\nabla\vc{u}^\ep}_{\rm L^2(\Omega, \R^{3\times3})}^2 \le {\rm B}[\vc{u}^\ep, \vc{u}^\ep]. 
\end{equation}

By selecting $\delta < \kappa_1/4$, using Poincar\'e's inequality in ${\rm H^1_{\Sigma_{\rm 0}}(\Omega, \R^3)}$, and then using the smallness of $\ep$ to guarantee $(c_1- \frac{2 \ep^2}{\kappa_1})=:c>0$, we ensure the desired ellipticity:

\begin{equation}
\min \{\kappa_1/2, c\} \, c_\Omega \, \ep^2 \norm{\vc{u}^\ep}^2_{\rm H^2(\Omega, \R^3)} \le {\rm B}[\vc{u}^\ep, \vc{u}^\ep]. 
\end{equation}

Additionally, starting with \eqref{eq:HS1_bound}, by utilizing Poincar\'e's inequality in ${\rm H^1_{\Sigma_{\rm 0}}(\Omega, \R^3)}$ one can obtain the following estimate  for the solution (under {\rm HS 1}):

\begin{equation} \label{eq:HS1_estimate}
\Big( \norm{\vc{u}^\ep}^2_{\rm H^1_{\Sigma_{\rm 0}}(\Omega, \R^3)} + \ep^2 \norm{\nabla\nabla \vc{u}^\ep}^2_{\rm L^2(\Omega, \R^{3 \times 3 \times 3})} \Big)^{1/2} \le {\rm const.} \norm{\vc{g}}_{\rm L^2(\Omega, \R^3)},
\end{equation}

for some generic constant independent of $\ep$.

\subsubsection{Coercivity in {\rm HS} 2}

Coercivity in this case can be shown in exactly the same way as in {\rm HS} 1. We simply write it down and omit the details,

\begin{equation} \label{eq:HS2_bound}
\min \{\kappa_1/2, c\} \, c_\Omega \, \norm{\vc{u}^\ep}^2_{\rm H^2(\Omega, \R^3)} \le {\rm B}[\vc{u}^\ep, \vc{u}^\ep]. 
\end{equation}

Naturally, a similar estimate can be obtained under the scheme {\rm HS 2},

\begin{equation} \label{eq:HS2_estimate}
 \norm{\vc{u}^\ep}_{\rm H^2(\Omega, \R^3)}  \le {\rm const.} \norm{\vc{g}}_{\rm L^2(\Omega, \R^3)},
\end{equation}

again, the constant is a generic constant independent of $\ep$. Hence, by the Lax-Milgram lemma, under both schemes, there exists a unique solution $\vc{u}^\ep \in \rm{H}^2_{\Sigma_{\rm 0}}(\Omega, \R^3)$ to \eqref{eq:weak}. We also refer the  reader to the works of \cite{eremeyev2021strong}, \cite{eremeyev2023ellipticity} regarding coercivity of different generalized continua. 

\section{Homogenization of the second-gradient continuum}

\subsection{The periodic unfolding}

We define the following domain decompositions (see~\cite{CDG02, Dam05, CDG08, CDG18}):

\begin{equation}
K_\ep^- {:=} \left \{ \ell \in \Z^3 \mid \ep (\ell + Y) \subset \overline{\Omega} \right \}, \quad
\Omega_\ep^- {:=} {\rm int} \left ( \cup_{\ell \in K_\ep^-} \ep (\ell + Y) \right), \quad
\Lambda_\ep^- {:=} \Omega \backslash \Omega_\ep^-.
\end{equation}

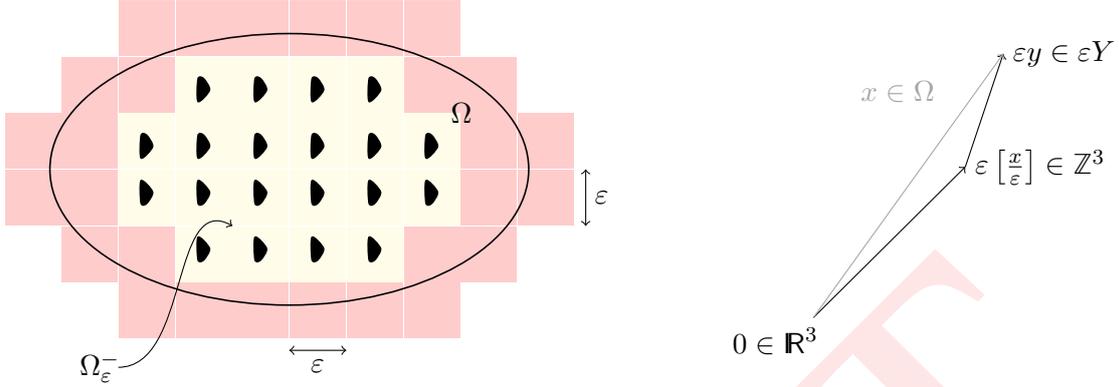
\begin{figure}[!tbh]
\begin{minipage}{.57\linewidth}
\begin{tikzpicture}[every node/.style={minimum size=.745cm-\pgflinewidth, outer sep=0pt}, scale=1.5]

\draw [<->] (2.6,0) -- (2.6,-0.5);
\draw [semithick,black] (2.5,-0.25) node [right] {$\ep$};

\draw [<->] (0,-1.6) -- (0.5,-1.6);
\draw [semithick,black] (0.25,-1.5) node [below] {$\ep$};

%inside domain
\node[fill=yellow!10] at (0.25,0.75) {};
\node[fill=yellow!10] at (0.75,0.75) {};

\node[fill=yellow!10] at (0.25,0.25) {};
\node[fill=yellow!10] at (0.75,0.25) {};
\node[fill=yellow!10] at (1.25,0.25) {};

\node[fill=yellow!10] at (0.25,-0.25) {};
\node[fill=yellow!10] at (0.75,-0.25) {};
\node[fill=yellow!10] at (1.25,-0.25) {};

\node[fill=yellow!10] at (0.25,-0.75) {};
\node[fill=yellow!10] at (0.75,-0.75) {};

%left side

\node[fill=yellow!10] at (-0.25,0.75) {};
\node[fill=yellow!10] at (-0.75,0.75) {};

\node[fill=yellow!10] at (-0.25,0.25) {};
\node[fill=yellow!10] at (-0.75,0.25) {};
\node[fill=yellow!10] at (-1.25,0.25) {};

\node[fill=yellow!10] at (-0.25,-0.25) {};
\node[fill=yellow!10] at (-0.75,-0.25) {};
\node[fill=yellow!10] at (-1.25,-0.25) {};

\node[fill=yellow!10] at (-0.25,-0.75) {};
\node[fill=yellow!10] at (-0.75,-0.75) {};

%boundary squares 
\node[fill=red!20] at (0.25,1.25) {};
\node[fill=red!20] at (0.75,1.25) {};
\node[fill=red!20] at (1.25,1.25) {};
\node[fill=red!20] at (1.25,0.75) {};
\node[fill=red!20] at (1.75,0.75) {};
\node[fill=red!20] at (1.75,0.25) {};
\node[fill=red!20] at (2.25,0.25) {};
\node[fill=red!20] at (2.25,-0.25) {};
\node[fill=red!20] at (1.75,-0.25) {};
\node[fill=red!20] at (1.75,-0.75) {};
\node[fill=red!20] at (1.25,-0.75) {};
\node[fill=red!20] at (1.25,-1.25) {};
\node[fill=red!20] at (0.75,-1.25) {};
\node[fill=red!20] at (0.25,-1.25) {};

%symmetry
\node[fill=red!20] at (-0.25,1.25) {};
\node[fill=red!20] at (-0.75,1.25) {};
\node[fill=red!20] at (-1.25,1.25) {};
\node[fill=red!20] at (-1.25,0.75) {};
\node[fill=red!20] at (-1.75,0.75) {};
\node[fill=red!20] at (-1.75,0.25) {};
\node[fill=red!20] at (-2.25,0.25) {};
\node[fill=red!20] at (-2.25,-0.25) {};
\node[fill=red!20] at (-1.75,-0.25) {};
\node[fill=red!20] at (-1.75,-0.75) {};
\node[fill=red!20] at (-1.25,-0.75) {};
\node[fill=red!20] at (-1.25,-1.25) {};
\node[fill=red!20] at (-0.75,-1.25) {};
\node[fill=red!20] at (-0.25,-1.25) {};

% Draw x and y axis lines
%\draw [->] (-3,0) -- (3,0) node [below] {$x$};
%\draw [->] (0,-3) -- (0,3) node [left] {$y$};

%Drawing of the domain. It is not necessarilly and ellipse but this is the best shape I could find.
\draw [semithick,black] (0,0) ellipse (2.1 and 1.2);

\draw [semithick,black] (1.75,0.5) node [left] {$\Omega$};

%\draw [semithick,black] (2.40,2.0) node [right] {$\Omega^{+}_\ep$};
%\draw[->] (2.5,2.0) to [out=180, in=100] (1.25,1.25);

\draw [semithick,black] (-1.4,-1.75) node [left] {$\Omega^{-}_\ep$};
\draw[->] (-1.50,-1.75) to [out=0, in=150] (-0.5,-0.5);

\draw[semithick,black,fill=black] plot [smooth cycle] coordinates {(0.25,0.3) (0.3,0.2) (0.2,0.1) (0.2,0.3)}; 
\draw[semithick,black,fill=black] plot [smooth cycle] coordinates {(0.75,0.3) (0.8,0.2) (0.7,0.1) (0.7,0.3)};
\draw[semithick,black,fill=black] plot [smooth cycle] coordinates {(1.25,0.3) (1.3,0.2) (1.2,0.1) (1.2,0.3)};
%\draw[semithick,gray,fill=gray] plot [smooth cycle] coordinates {(1.7,0.2) (1.75,0.3) (1.7,0.3) (1.7,0.2)};

\draw[semithick,black,fill=black] plot [smooth cycle] coordinates {(0.25,0.8) (0.3,0.7) (0.2,0.6) (0.2,0.8)};
\draw[semithick,black,fill=black] plot [smooth cycle] coordinates {(0.75,0.8) (0.8,0.7) (0.7,0.6) (0.7,0.8)};
%\draw[semithick,gray,fill=gray] plot [smooth cycle] coordinates {(1.2,0.73) (1.25,0.81) (1.2,0.77) (1.2,0.65)};

\draw[semithick,black,fill=black] plot [smooth cycle] coordinates {(-0.25,0.3) (-0.2,0.2) (-0.3,0.1) (-0.3,0.3)}; 
\draw[semithick,black,fill=black] plot [smooth cycle] coordinates {(-0.75,0.3) (-0.7,0.2) (-0.8,0.1) (-0.8,0.3)};
\draw[semithick,black,fill=black] plot [smooth cycle] coordinates {(-1.25,0.3) (-1.2,0.2) (-1.3,0.1) (-1.3,0.3)};
%\draw[semithick,gray,fill=gray] plot [smooth cycle] coordinates {(1.7,0.2) (1.75,0.3) (1.7,0.3) (1.7,0.2)};

\draw[semithick,black,fill=black] plot [smooth cycle] coordinates {(-0.25,0.8) (-0.2,0.7) (-0.3,0.6) (-0.3,0.8)};
\draw[semithick,black,fill=black] plot [smooth cycle] coordinates {(-0.75,0.8) (-0.7,0.7) (-0.8,0.6) (-0.8,0.8)};
%\draw[semithick,gray,fill=gray] plot [smooth cycle] coordinates {(-1.2,0.73) (-1.25,0.81) (-1.2,0.77) (-1.2,0.65)};

\draw[semithick,black,fill=black] plot [smooth cycle] coordinates {(-0.25,-0.3) (-0.2,-0.2) (-0.3,-0.1) (-0.3,-0.3)}; 
\draw[semithick,black,fill=black] plot [smooth cycle] coordinates {(-0.75,-0.3) (-0.7,-0.2) (-0.8,-0.1) (-0.8,-0.3)};
\draw[semithick,black,fill=black] plot [smooth cycle] coordinates {(-1.25,-0.3) (-1.2,-0.2) (-1.3,-0.1) (-1.3,-0.3)};
%\draw[semithick,gray,fill=gray] plot [smooth cycle] coordinates {(1.7,0.2) (1.75,0.3) (1.7,0.3) (1.7,0.2)};

\draw[semithick,black,fill=black] plot [smooth cycle] coordinates {(-0.25,-0.8) (-0.2,-0.7) (-0.3,-0.6) (-0.3,-0.8)};
\draw[semithick,black,fill=black] plot [smooth cycle] coordinates {(-0.75,-0.8) (-0.7,-0.7) (-0.8,-0.6) (-0.8,-0.8)};
%\draw[semithick,gray,fill=gray] plot [smooth cycle] coordinates {(-1.2,0.73) (-1.25,0.81) (-1.2,0.77) (-1.2,0.65)};

\draw[semithick,black,fill=black] plot [smooth cycle] coordinates {(0.25,-0.3) (0.3,-0.2) (0.2,-0.1) (0.2,-0.3)}; 
\draw[semithick,black,fill=black] plot [smooth cycle] coordinates {(0.75,-0.3) (0.8,-0.2) (0.7,-0.1) (0.7,-0.3)};
\draw[semithick,black,fill=black] plot [smooth cycle] coordinates {(1.25,-0.3) (1.3,-0.2) (1.2,-0.1) (1.2,-0.3)};
%\draw[semithick,gray,fill=gray] plot [smooth cycle] coordinates {(1.7,0.2) (1.75,0.3) (1.7,0.3) (1.7,0.2)};

\draw[semithick,black,fill=black] plot [smooth cycle] coordinates {(0.25,-0.8) (0.3,-0.7) (0.2,-0.6) (0.2,-0.8)};
\draw[semithick,black,fill=black] plot [smooth cycle] coordinates {(0.75,-0.8) (0.8,-0.7) (0.7,-0.6) (0.7,-0.8)};
\end{tikzpicture}
\end{minipage}
\hspace{.05\linewidth}
\begin{minipage}{.45\linewidth}
\begin{tikzpicture}[every node/.style={minimum size=.51cm-\pgflinewidth, outer sep=0pt}]
% Draw thin grid lines with color 40% gray + 60% white
%\draw [step=2.0,thin,gray!60] (1,1) grid (5,5);

\draw [semithick,black] (-0.5,0.0) node [below] {$0 \in \R^3$};
\draw [->](0,0) -- (2,2);
\draw [->] (2,2) -- (2.5,3.5);
\draw [->,gray!70!white] (0,0) -- (2.5,3.5);

\draw [semithick,black] (2.0,2.0) node [right] {$\ep \left[ \frac{x}{\ep} \right] \in \Z^3$};
\draw [semithick,black] (2.5,3.5) node [right] {$\ep y \in \ep Y$};
\draw [semithick,gray!70!white] (0.5,3.0) node [right] {$x \in \Omega$};
\end{tikzpicture}
\end{minipage}
\caption{\small Schematic decomposition of the domain and definition of the unfolding operator on a periodic grid.}
\label{fig:unfolding}
\end{figure}

Let $[\vc{z}]_Y = (\lfloor z_1\rfloor, \lfloor z_2 \rfloor, \lfloor z_3 \rfloor)$ denote the integer part of $\vc{z} \in \R^3$ and denote by $\{\vc{z}\}_Y$ the difference $\vc{z} - [\vc{z}]_Y$ which belongs to $Y$. Regarding our multiscale problem that depends on a small length parameter $\ep > 0$, we can decompose any $\vc{x} \in \R^3$ using the maps $[\cdot]_Y: \R^3 \mapsto \Z^3$ and $\{\cdot\}_Y: \R^3 \mapsto Y$ the following way (see~\figref{fig:unfolding} (right)),

\begin{equation}
\vc{x} = \ep \left( \left[ \frac{\vc{x}}{\ep}\right]_Y + \left\{ \frac{\vc{x}}{\ep} \right\}_Y \right).
\end{equation}

For any Lebesgue measurable function $\varphi$ on $\Omega$ we define the periodic unfolding operator by,

\begin{equation}\label{eq:two_comp}
\mathcal{T}_\ep (\varphi)(\vc{x}, \vc{y}) =
\begin{cases}
\varphi \left( \ep \left[ \frac{\vc{x}}{\ep} \right]_Y + \ep \vc{y} \right) & \text{ for a.e. } (\vc{x}, \vc{y}) \in \Omega_{\ep}^- \times Y \\
0 & \text{ for a.e. } (\vc{x}, \vc{y}) \in \Lambda_{\ep}^- \times Y.
\end{cases}
\end{equation}

\begin{prop} \label{prop:properties}
For any $p \in [1,+\infty)$ the unfolding operator $\mathcal{T}_\ep: {\rm L}^p(\Omega) \mapsto {\rm L}^p(\Omega \times Y)$ is linear, continuous, and has the following properties:

\begin{itemize}

\item[I.] 
$\mathcal{T}_\ep (\varphi \, \psi) = \mathcal{T}_\ep (\varphi) \, \mathcal{T}_\ep (\psi)$ for every pair of Lebesgue measurable functions $\varphi$, $\psi$ on $\Omega$

\item[II.] 
For every $\varphi \in {\rm L}^1(\Omega)$ we have,
\begin{equation}
\frac{1}{|Y|} \int_{\Omega \times Y} \mathcal{T}_\ep(\varphi)(\vc{x}, \vc{y}) \, d\vc{x} \, d\vc{y} = \int_{\Omega_{\ep}^-} \varphi(\vc{x}) \, d\vc{x} = \int_{\Omega_{\ep}} \varphi(\vc{x}) \, d\vc{x} - \int_{\Lambda_{\ep}^-} \varphi(\vc{x}) \, d\vc{x}
\end{equation}

\item[III.] 
$\norm{\mathcal{T}_\ep(\varphi)}_{{\rm L}^p(\Omega \times Y)} \le |Y|^{1/p} \norm{\varphi}_{{\rm L}^p(\Omega)}$ for every $\varphi \in {\rm L}^p(\Omega)$

\item[IV.] 
$\mathcal{T}_\ep(\varphi) \to \varphi$ strongly in ${\rm L}^p(\Omega \times Y)$ for $\varphi \in {\rm L}^p(\Omega)$ as $\ep \to 0$

\item[V.] 
If $\{ \varphi_\ep \}_{\ep}$ is a sequence in ${\rm L}^p(\Omega)$ such that $\varphi_\ep \to \varphi$ strongly in ${\rm L}^p(\Omega)$, then $\mathcal{T}_\ep (\varphi_\ep) \to \varphi$ strongly in ${\rm L}^p(\Omega \times Y)$

\item[VI.] 
If $\varphi \in L^p(Y)$ is Y-periodic and $\varphi_\ep(\vc{x}) = \varphi \left( \frac{\vc{x}}{\ep} \right)$ then $\mathcal{T}_\ep(\varphi_\ep) \to \varphi$ strongly in ${\rm L}^p(\Omega \times Y)$ as $\ep \to 0$

\item[VII.] 
If $\phi_\ep \weak \phi$ in ${\rm H}^{1}(\Omega)$ then there exists an non-relabelled subsequence and a $\hat{\phi} \in {\rm L}^2(\Omega; {\rm H}^1_{\rm per}(Y))$ such that 
	\begin{itemize}
		\item[a.]	$\mathcal{T}_\ep(\phi_\ep) \weak \phi$ in ${\rm L}^2(\Omega; {\rm H}^1(Y))$
		\item[b.]	$\mathcal{T}_\ep(\nabla \phi_\ep) \weak \nabla_x \phi + \nabla_y \hat{\phi}$ in ${\rm L}^2(\Omega \times Y, \R^3)$
	\end{itemize}

\item[VIII.] Let $\phi_\ep \in {\rm H}^{1}(\Omega)$ and assume that $\{\phi_\ep\}_{\ep}$ is a bounded sequence in ${\rm L}^2(\Omega)$ satisfying $\ep \, \norm{\nabla \phi_\ep}_{{\rm L}^2(\Omega; \R^d)} \le c$ ($c$ is a constant independent of $\ep$) then there exists an non-relabelled subsequence and a $\hat{\phi} \in {\rm L}^2(\Omega; {\rm H}^1_{\rm per}(Y))$ such that 
	\begin{itemize}
		\item[a.]	$\mathcal{T}_\ep(\phi_\ep) \weak \hat{\phi}$ in ${\rm L}^2(\Omega; {\rm H}^1(Y))$
		\item[b.]	$\ep \, \mathcal{T}_\ep(\nabla \phi_\ep) \weak \nabla_y \hat{\phi}$ in ${\rm L}^2(\Omega \times Y)$
	\end{itemize}

\item[IX.] 
If $\phi_\ep \weak \phi$ in ${\rm H}^{2}(\Omega)$ then there exists an non-relabelled subsequence and a $\hat{\phi} \in {\rm L}^2(\Omega; {\rm H}^2_{\rm per}(Y))$ such that 
	\begin{itemize}
		\item[a.]	$\mathcal{T}_\ep(\phi_\ep) \weak \phi$ in ${\rm L}^2(\Omega; {\rm H}^2(Y))$
		\item[b.]	$\mathcal{T}_\ep(\nabla \phi_\ep) \weak \nabla_x \phi$ in ${\rm L}^2(\Omega \times Y, \R^3)$
		\item[c.]    $\mathcal{T}_\ep(\nabla\nabla \phi_\ep) \weak \nabla_x\nabla_x \phi + \nabla_y\nabla_y \hat{\phi}$ in ${\rm L}^2(\Omega \times Y, \R^{3 \times 3})$
	\end{itemize}
\end{itemize} 
\end{prop}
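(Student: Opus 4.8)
The plan is to treat the statement in three layers of increasing difficulty: the algebraic identities (i)--(iii), the approximation/convergence results (iv)--(vi), and the two-scale compactness with corrector extraction (vii)--(viii). Linearity is immediate from \eqref{eq:two_comp}, and the claimed continuity is exactly the bound (iii). I would read off (i), (ii), (iii) directly from the definition: since $\mathcal{T}_\ep$ acts by composition with the piecewise map $\vc{x}\mapsto \ep[\vc{x}/\ep]_Y+\ep\vc{y}$, which is constant in $\vc{x}$ on each cell $\ep(\ell+Y)$, the multiplicativity (i) holds because evaluation commutes with products. For (ii) I would decompose $\Omega_\ep^-$ into the cells $\ep(\ell+Y)$, $\ell\in K_\ep^-$, change variables $\vc{x}\mapsto\vc{y}$ on each cell (Jacobian $\ep^3=|\text{cell}|/|Y|$), and sum; the restriction to $\Omega_\ep^-$, equivalently subtracting the boundary-layer contribution over $\Lambda_\ep^-$, yields the stated identity. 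Then (iii) follows by applying (ii) to $|\varphi|^p\in L^1(\Omega)$ together with $\mathcal{T}_\ep(|\varphi|^p)=|\mathcal{T}_\ep(\varphi)|^p$.

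For the convergence statements I would prove (iv) first on a dense class and then pass to the limit. For $\varphi\in C_c(\overline\Omega)$ uniform continuity gives $|\mathcal{T}_\ep(\varphi)(\vc{x},\vc{y})-\varphi(\vc{x})|\to 0$ uniformly, since $|\ep[\vc{x}/\ep]_Y+\ep\vc{y}-\vc{x}|\le \ep\sqrt{3}$; bounded convergence then yields $L^p$ convergence, while the error on $\Lambda_\ep^-$ vanishes because $|\Lambda_\ep^-|\to 0$. The general $\varphi\in L^p(\Omega)$ case follows by density combined with the uniform bound (iii). Property (v) is then a triangle-inequality argument: split $\mathcal{T}_\ep(\varphi_\ep)-\varphi=\mathcal{T}_\ep(\varphi_\ep-\varphi)+(\mathcal{T}_\ep(\varphi)-\varphi)$, control the first term by (iii) applied to $\varphi_\ep-\varphi\to 0$ and the second by (iv). Property (vi) is a direct computation from the definition using the $Y$-periodicity of $\varphi$, reducing to (iv)/(v).

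The substantial content is (vii)--(viii), whose engine is the scaling identity $\nabla_y\mathcal{T}_\ep(\varphi)=\ep\,\mathcal{T}_\ep(\nabla\varphi)$, obtained by differentiating \eqref{eq:two_comp} in $\vc{y}$. For (vii), given $\phi_\ep\weak\phi$ in $H^1(\Omega)$, property (iii) bounds $\mathcal{T}_\ep(\phi_\ep)$ and $\mathcal{T}_\ep(\nabla\phi_\ep)$ in $L^2(\Omega\times Y)$, so along a subsequence they converge weakly; the scaling identity shows $\nabla_y\mathcal{T}_\ep(\phi_\ep)=\ep\,\mathcal{T}_\ep(\nabla\phi_\ep)\to 0$ strongly, so the weak limit of $\mathcal{T}_\ep(\phi_\ep)$ is $\vc{y}$-independent and, using (v) on a suitable approximation, equals $\phi$, giving (a). To extract $\hat\phi$ and prove (b) I would rescale: consider $\ep^{-1}\big(\mathcal{T}_\ep(\phi_\ep)-\mathcal{M}_\ep(\phi_\ep)\big)$, subtracting the cellwise mean $\mathcal{M}_\ep(\phi_\ep)$; its $\vc{y}$-gradient equals $\mathcal{T}_\ep(\nabla\phi_\ep)$ up to its mean, hence is bounded, so it has a weak limit $\hat\phi\in L^2(\Omega;H^1_{\rm per}(Y))$, and testing against smooth oscillating functions identifies the limit of $\mathcal{T}_\ep(\nabla\phi_\ep)$ as $\nabla_x\phi+\nabla_y\hat\phi$. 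Property (viii) is obtained by running the same argument one level higher: $\nabla_y\mathcal{T}_\ep(\nabla\phi_\ep)=\ep\,\mathcal{T}_\ep(\nabla\nabla\phi_\ep)$ forces $\mathcal{T}_\ep(\nabla\phi_\ep)\weak\nabla_x\phi$ with no $\vc{y}$-dependence, giving (b), and rescaling $\ep^{-1}(\mathcal{T}_\ep(\nabla\phi_\ep)-\nabla_x\phi)$ produces the second-order corrector and the two-scale limit $\nabla_x\nabla_x\phi+\nabla_y\nabla_y\hat\phi$ in (c).

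I expect the main obstacle to lie in the corrector extraction for (vii)/(viii): precisely identifying the weak limits, proving that $\hat\phi$ is genuinely $Y$-periodic with the stated Sobolev regularity, and --- crucially for (c) --- showing that the mixed term $\nabla_x\nabla_y\hat\phi$ does not appear, so that only the pure $\nabla_y\nabla_y\hat\phi$ survives in the two-scale limit of the Hessian. Controlling the boundary-layer set $\Lambda_\ep^-$ (where $\mathcal{T}_\ep$ vanishes) at every step, and ensuring all convergences are taken along a single common subsequence, are the remaining technical points.
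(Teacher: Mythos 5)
The paper itself contains no proof of this proposition: it defers entirely to the unfolding literature, citing \cite{CDG08} (and specifically Theorem~3.6 there for property \emph{viii}). So the comparison is between your outline and the standard proofs of Cioranescu--Damlamian--Griso. For items \emph{i}--\emph{vii} your plan is exactly that standard route and is sound: the cell-by-cell change of variables for \emph{ii}, the identity $\mathcal{T}_\ep(|\varphi|^p)=|\mathcal{T}_\ep(\varphi)|^p$ feeding \emph{iii}, density plus the uniform bound for \emph{iv}--\emph{v}, the exact identity $\mathcal{T}_\ep(\varphi(\cdot/\ep))=\varphi(\vc{y})$ on $\Omega_\ep^-\times Y$ for \emph{vi}, and for \emph{vii} the scaling relation $\nabla_y\mathcal{T}_\ep(\varphi)=\ep\,\mathcal{T}_\ep(\nabla\varphi)$ together with the recentred quantity $\ep^{-1}\bigl(\mathcal{T}_\ep(\phi_\ep)-\mathcal{M}_\ep(\phi_\ep)\bigr)-\vc{y}_c\cdot\mathcal{M}_\ep(\nabla\phi_\ep)$. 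Two small points: for \emph{vii}(a) you do not need an approximation argument, since Rellich gives $\phi_\ep\to\phi$ strongly in ${\rm L}^2(\Omega)$ and \emph{v} applies directly; and the $Y$-periodicity of $\hat\phi$, which you flag, is obtained by comparing traces of the recentred quantity on opposite faces of $Y$ (a one-cell shift of $[\vc{x}/\ep]_Y$), which is precisely where subtracting the affine part $\vc{y}_c\cdot\mathcal{M}_\ep(\nabla\phi_\ep)$ is essential.

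The one genuine gap is in \emph{viii}(c). Applying your first-order argument to the components of $\nabla\phi_\ep\weak\nabla\phi$ in ${\rm H}^1(\Omega;\R^3)$ yields a \emph{vector-valued} corrector $\hat{\Phi}\in{\rm L}^2(\Omega;{\rm H}^1_{\rm per}(Y;\R^3))$ with $\mathcal{T}_\ep(\nabla\nabla\phi_\ep)\weak\nabla_x\nabla_x\phi+\nabla_y\hat{\Phi}$, which is not yet the asserted form $\nabla_x\nabla_x\phi+\nabla_y\nabla_y\hat\phi$ for a single $\hat\phi\in{\rm L}^2(\Omega;{\rm H}^2_{\rm per}(Y))$. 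You correctly name this as the obstacle but do not close it. The missing step is short: $\mathcal{T}_\ep(\nabla\nabla\phi_\ep)$ is a symmetric matrix for every $\ep$, so its weak limit is symmetric, whence $\partial_{y_j}\hat\Phi_i=\partial_{y_i}\hat\Phi_j$ a.e.; a curl-free field in ${\rm H}^1_{\rm per}(Y;\R^3)$ on the cube decomposes as a $\vc{y}$-constant plus $\nabla_y\hat\phi$ with $\hat\phi\in{\rm H}^2_{\rm per}(Y)$ (take $\hat\phi$ solving $\Delta_y\hat\phi={\rm div}_y\hat\Phi$ with periodic conditions and zero mean, which also gives measurability in $\vc{x}$), and the constant contributes nothing to $\nabla_y\hat\Phi$. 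Alternatively one can follow \cite{CDG08} and construct $\hat\phi$ directly from a second-order recentring at scale $\ep^{-2}$. One of these arguments must be supplied; without it part (c) is unproved. The remaining points you list ($\Lambda_\ep^-$, diagonal extraction of a common subsequence) are routine.
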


The proof of \propref{prop:properties} can be found in reference \cite{CDG08}. We draw the readers attention to property {\it IX}. which deals with unfolding higher gradients (and shows the true usefulness of the unfolding method). The proof of property {\it IX}. can be found in reference \cite[Theorem 3.6, pg. 1603]{CDG08}.

\subsection{Presentation and discussion of the main results}

In this section we present the main results of our work, discuss their significance and consequences, and address how they compare/differ with results in the current literature. Their, respective, proofs are postponed until \secref{sec:proofs}.

\begin{restatable}{theorem}{hsone}\label{T1:hs1}
If $\vc{u}^\ep \in {\rm H}^2_{\Sigma_{\rm 0}}(\Omega, \R^3)$ is the solution to \eqref{eq:weak} then, under the {\rm HS 1} scheme, there exist $\vc{u}^0 \in {\rm H}^1_{\Sigma_{\rm 0}}(\Omega; \R^3)$, $\hat{\vc{u}} \in {\rm L}^2(\Omega; {\rm H}^2_{\rm per}(Y; \R^3))$ such that,

\begin{align}
&\mathcal{T}_\ep (\vc{u}^\ep) \weak \vc{u}^0 \text{ in } {\rm L}^2(\Omega; {\rm H}^2(Y; \R^3)), \label{eq:unfold_u_HS1}\\
&\mathcal{T}_\ep(\nabla \vc{u}^\ep) \weak \nabla_x \vc{u}^0 + \nabla_y \hat{\vc{u}} \text{ in } {\rm L}^2(\Omega; {\rm H}^1(Y; \R^{3 \times 3})), \label{eq:unfold_grad_u_HS1} \\
&\mathcal{T}_\ep(\ep \nabla\nabla \vc{u}^\ep) \weak \nabla_y \nabla_y \hat{\vc{u}} \text{ in } {\rm L}^2(\Omega \times Y; \R^{3 \times 3 \times 3}), \label{eq:unfold_sec_grad_u_HS1} 
\end{align}

and $(\vc{u}^0, \hat{\vc{u}})$ is the unique solution set of, 

\begin{gather} 
\begin{aligned}\label{eq:effective_HS1}
\int_{\Omega \times Y} \mathsf{K}(\vc{y}) (\nabla_x \vc{u}^0 + &\nabla_y \hat{\vc{u}}) \two (\nabla_x \vc{V} + \nabla_y \overline{\vc{W}}) \, d\vc{y} d\vc{x} \\
+
&
\int_{\Omega \times Y} \mathsf{A}(\vc{y}) \nabla_y\nabla_y \hat{\vc{u}} \three \nabla_y\nabla_y \overline{\vc{W}} \, d\vc{y} \, d\vc{x} = \int_{\Omega \times Y} \vc{g} \cdot \vc{V} \, d\vc{y} d\vc{x},
\end{aligned}
\end{gather}

for all $\vc{V} \in {\rm H}^1_{\Sigma_{\rm 0}}(\Omega; \R^3)$ and $\overline{\vc{W}} \in {\rm L}^2(\Omega; {\rm H}^2(Y; \R^3))$. Furthermore, \eqref{eq:effective_HS1} is equivalent to the following,

\begin{align}\label{effective:HS1}
&\int_{\Omega} \mathsf{K}^{\rm eff} \nabla_x \vc{u}^0 \two \nabla_x \vc{V} d\vc{x} = \int_{\Omega} \vc{g} \one \vc{V} \, d\vc{x},
\end{align}

if $\hat{u}_i (\vc{x}, \vc{y}) = \frac{\partial u^0_\alpha}{\partial_{x_\beta}}(\vc{x}) \varphi^{\alpha \beta}_i(\vc{y}) + \kappa_i(\vc{x})$, for $i=1,2,3$, and we select $\overline{\vc{W}} \equiv \vc{0}$. Here, 

\begin{equation}
\mathsf{K}^{\rm eff}_{ij\alpha \beta} := \int_Y \mathsf{K}_{ijkl}(\vc{y}) \left( \delta_{\alpha k} \delta_{\beta l} + \frac{\partial}{\partial_{y_l}} \varphi^{\alpha \beta}_k \right) \, d\vc{y},
\end{equation}

where $\vc{\varphi}^{\alpha \beta}$ is the unique solution (up to a constant) to,

\begin{gather}\label{hs1:corr_ref}
\left\{
\begin{aligned}
-&{\rm div_y} \left( \mathsf{K} \two \left( \vc{e}_\alpha \otimes \vc{e}_{\beta} + \nabla_y \vc{\varphi}^{\alpha \beta} \right) - {\rm div_y} \left( \mathsf{A} \three \nabla_y\nabla_y \vc{\varphi}^{\alpha \beta} \right) \right) = \vc{0} \text{ in } Y, \\ 
& \vc{\varphi}^{\alpha \beta}(\vc{y}) \text{ is } Y-\text{periodic}.
\end{aligned}
\right.
\end{gather}
\end{restatable}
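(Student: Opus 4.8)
The plan is to run the periodic-unfolding machinery of \propref{prop:properties} on the weak formulation \eqref{eq:weak}, carefully tracking the powers of $\ep$ attached to $\mathsf{K}$, $\mathsf{S}$, and $\mathsf{A}$ in the {\rm HS 1} hyperstress.

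\emph{Step 1 (compactness and two-scale limits).} I would start from the a priori bound \eqref{eq:HS1_estimate}, which gives that $\{\vc{u}^\ep\}$ is bounded in ${\rm H}^1_{\Sigma_{\rm D}}(\Omega,\R^3)$ and $\{\ep\nabla\nabla\vc{u}^\ep\}$ is bounded in ${\rm L}^2(\Omega,\R^{3\times3\times3})$. Up to a non-relabelled subsequence $\vc{u}^\ep\weak\vc{u}^0$ in ${\rm H}^1$, and \propref{prop:properties}(vii) yields \eqref{eq:unfold_u_HS1}--\eqref{eq:unfold_grad_u_HS1} with some $\hat{\vc{u}}\in{\rm L}^2(\Omega;{\rm H}^1_{\rm per}(Y;\R^3))$. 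To promote $\hat{\vc{u}}$ to ${\rm H}^2_{\rm per}$ I would use the commutation of $\mathcal{T}_\ep$ with spatial derivatives that underlies \propref{prop:properties}(viii): since $\mathcal{T}_\ep(\ep\nabla\nabla\vc{u}^\ep)$ is (up to the scaling) the $\vc{y}$-gradient of $\mathcal{T}_\ep(\nabla\vc{u}^\ep)$ and is bounded in ${\rm L}^2(\Omega\times Y)$, the weak ${\rm L}^2$-limit of $\mathcal{T}_\ep(\nabla\vc{u}^\ep)$ lies in ${\rm L}^2(\Omega;{\rm H}^1(Y))$ and passing the $\vc{y}$-derivative to the limit both gives $\hat{\vc{u}}\in{\rm L}^2(\Omega;{\rm H}^2_{\rm per}(Y;\R^3))$ and fixes the two-scale limit of $\ep\nabla\nabla\vc{u}^\ep$ recorded in \eqref{eq:unfold_sec_grad_u_HS1}.

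\emph{Step 2 (passage to the limit).} I would test \eqref{eq:weak} with the oscillating fields $\vc{v}^\ep(\vc{x})=\vc{V}(\vc{x})+\ep\vc{W}(\vc{x},\vc{x}/\ep)$, $\vc{V}\in C^\infty_{\Sigma_{\rm D}}$, $\vc{W}\in C^\infty_c(\Omega;C^\infty_{\rm per}(Y))$, whose first gradient has two-scale profile $\nabla_x\vc{V}+\nabla_y\vc{W}$ and whose second gradient is dominated by the $O(\ep^{-1})$ term $\ep^{-1}\nabla_y\nabla_y\vc{W}$. Applying $\mathcal{T}_\ep$ and \propref{prop:properties}(ii) to convert each integral over $\Omega$ into one over $\Omega\times Y$, I would count $\ep$-powers: the $\mathsf{K}$-block of $\sigma^\ep$ survives and produces $\int_{\Omega\times Y}\mathsf{K}(\vc{y})(\nabla_x\vc{u}^0+\nabla_y\hat{\vc{u}})\two(\nabla_x\vc{V}+\nabla_y\vc{W})$; the two $\ep^2\mathsf{S}$-blocks (the chiral couplings) each carry a surplus power of $\ep$ and drop out; the $\ep^2\mathsf{A}$-block of $\mu^\ep$, paired with the $O(\ep^{-1})$ second $\vc{y}$-gradient of $\vc{v}^\ep$ and the limit of Step 1, contributes $\int_{\Omega\times Y}\mathsf{A}(\vc{y})\nabla_y\nabla_y\hat{\vc{u}}\three\nabla_y\nabla_y\vc{W}$. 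A density argument then delivers the two-scale variational identity \eqref{eq:effective_HS1} for all admissible $(\vc{V},\overline{\vc{W}})$.

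\emph{Step 3 (well-posedness, decoupling, full convergence).} The resulting two-scale bilinear form is coercive on ${\rm H}^1_{\Sigma_{\rm D}}(\Omega;\R^3)\times{\rm L}^2(\Omega;{\rm H}^2_{\rm per}(Y;\R^3)/\R^3)$ by the uniform ellipticity of $\mathsf{K}\in\mathcal{M}^4$ and $\mathsf{A}\in\mathcal{M}^6$ together with the Poincaré inequality, so Lax--Milgram gives a unique pair $(\vc{u}^0,\hat{\vc{u}})$; uniqueness upgrades the subsequential limits to convergence of the whole family. To decouple, I would set $\vc{V}=\vc{0}$ to isolate the cell identity, whose unique (up to a constant) solution has the affine-in-the-macroscopic-gradient form $\hat{u}_i=\partial_{x_\beta}u^0_\alpha\,\varphi^{\alpha\beta}_i+\kappa_i$ with $\vc{\varphi}^{\alpha\beta}$ solving \eqref{hs1:corr_ref}; substituting this ansatz and choosing $\overline{\vc{W}}=\vc{0}$ collapses \eqref{eq:effective_HS1} to the macroscopic Cauchy law \eqref{effective:HS1} with $\mathsf{K}^{\rm eff}$ as defined. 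The main obstacle I anticipate is the rigorous handling of the second-gradient term: establishing that $\hat{\vc{u}}$ inherits ${\rm H}^2_{\rm per}$-regularity from the $\ep\nabla\nabla\vc{u}^\ep$ bound and correctly identifying its two-scale limit, together with the $\ep$-bookkeeping in Step 2 showing that the chiral $\mathsf{S}$-terms vanish while the $O(\ep^2)$ $\mathsf{A}$-term is resurrected by the $O(\ep^{-1})$ second $\vc{y}$-gradient of the corrector test function. A secondary point is verifying coercivity of the limit form on the quotient space, so that Lax--Milgram applies and yields convergence of the full sequence rather than a subsequence.
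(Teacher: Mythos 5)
Your overall strategy---a priori bound \eqref{eq:HS1_estimate}, unfolding via \propref{prop:properties}, oscillating corrector-type test functions, then decoupling into a cell problem and a macroscopic law---is the same as the paper's; the only cosmetic difference is that you test with the combined field $\vc{V}+\ep\vc{W}(\cdot,\cdot/\ep)$ where the paper tests separately with $\vc{V}(\vc{x})$ and with $\ep\,U(\vc{x})\vc{W}(\vc{x}/\ep)$ and adds the two resulting identities.

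There is, however, a genuine internal contradiction between your Step 1 and your Step 2, and it sits exactly at the delicate point of the proof. In Step 1 you identify the limit of $\mathcal{T}_\ep(\ep\nabla\nabla\vc{u}^\ep)$ by writing it as $\nabla_y\mathcal{T}_\ep(\nabla\vc{u}^\ep)$ and passing the $\vc{y}$-derivative to the limit; that argument yields $\mathcal{T}_\ep(\ep\nabla\nabla\vc{u}^\ep)\weak\nabla_y\nabla_y\hat{\vc{u}}$, not the zero limit \eqref{eq:unfold_sec_grad_u_HS1} which you claim it ``fixes'' (it would give zero only if $\hat{\vc{u}}$ were affine, hence by periodicity constant, in $\vc{y}$). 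Then in Step 2 you use precisely the nonzero limit $\nabla_y\nabla_y\hat{\vc{u}}$ to resurrect the $\ep^2\mathsf{A}$-block against the $O(\ep^{-1})$ part of $\nabla\nabla\vc{v}^\ep$, producing an extra term $\int_{\Omega\times Y}\mathsf{A}\,\nabla_y\nabla_y\hat{\vc{u}}\three\nabla_y\nabla_y\overline{\vc{W}}$ in the two-scale identity; the resulting equation is not \eqref{eq:effective_HS1}, which contains only the $\mathsf{K}$-term. The paper takes the opposite branch: it proves \eqref{eq:unfold_sec_grad_u_HS1} by a separate duality argument (for $\zeta\in C_0^\infty(\Omega)$, $\int_\Omega\ep\,\partial^2_{x_jx_k}u^\ep_i\,\zeta=\int_\Omega\ep\,u^\ep_i\,\partial^2_{x_jx_k}\zeta\to0$, so the weak limit vanishes), concludes that the $\mathsf{A}$-block dies in the limit, and arrives at \eqref{eq:effective_HS1} with no $\mathsf{A}$-contribution. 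You cannot have both: either the limit in \eqref{eq:unfold_sec_grad_u_HS1} is zero, in which case your Step 2 must delete the $\mathsf{A}$-term, or it is $\nabla_y\nabla_y\hat{\vc{u}}$, in which case you have derived a two-scale equation different from \eqref{eq:effective_HS1} (one which, tested with $\vc{V}=\vc{0}$, is the version compatible with the $\mathsf{A}$-dependent cell problem \eqref{hs1:corr_ref}). As written, the proposal establishes neither the stated convergence \eqref{eq:unfold_sec_grad_u_HS1} nor the stated identity \eqref{eq:effective_HS1}; you must commit to a single identification of the limit of $\mathcal{T}_\ep(\ep\nabla\nabla\vc{u}^\ep)$, justify it, and propagate it consistently through Steps 1--3, flagging explicitly wherever the outcome then departs from the theorem as stated.
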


The model in \thmref{T1:hs1} approximates a second-gradient heterogeneous material with chiral effects by a homogeneous classical linear elastic material. Thus, through homogenization we arrive to a non-local constitutive law where the non-locality is a due to the scaling \eqref{eq:hs1}. There are two main differences from the models that exist in the literature: First, $\vc{u}^\ep$ possesses higher regularity due to Sobolev embedding theory. Indeed, the solution $\vc{u}^\ep$ of \eqref{eq:weak} under \eqref{eq:hs1} is (H\"older) continuous $C^{0,\lambda}(\Omega, \R^3)$, for all $\lambda \in (0,1/2)$ since,

\[
	{\rm H}^2(\Omega, \R^3) \hookrightarrow C^{0,\lambda}(\overline{\Omega}, \R^3) \quad \forall \, \lambda \in (0,1/2),
\]

with the embedding being compact \cite[Theorem 2.84, pg. 98]{DD12}. Second, the structure of the corrector problem in \eqref{hs1:corr_ref}. The corrector solutions are constructed using second-gradient theory and depend both on the material tensor $\mathsf{K}$ as well as the tensor $\mathsf{A}$. Moreover, when no second-gradient effects are present, i.e., the tensor $\mathsf{A}$ is identically zero, we recover the classical corrector problem as in references \cite{Bens78, SP80, BP89, CioDo00, MV10}. Additionally, the corrector solution inherits the same regularity as $\vc{u}^\ep$ and, with it, all the attributes that make it more appealing from the point of view of computational mechanics, i.e., H\"older continuity.

\begin{restatable}{theorem}{hstwo}\label{T2:hs2}
If $\vc{u}^\ep \in {\rm H}^2_{\Sigma_{\rm 0}}(\Omega, \R^3)$ is the solution to \eqref{eq:weak} then, under the {\rm HS 2} scheme, there exist $\vc{u}^0 \in {\rm H}^2_{\Sigma_{\rm 0}}(\Omega, \R^3)$, $\hat{\vc{u}} \in {\rm L}^2(\Omega; {\rm H}^2_{\rm per}(Y; \R^3))$ such that,

\begin{align}
%&\vc{u}^\ep \weak \vc{u}^0 \text{ in } H^1(\Omega; \R^d)\\
&\mathcal{T}_\ep (\vc{u}^\ep) \weak \vc{u}^0 \text{ in } {\rm L}^2(\Omega; {\rm H}^2(Y; \R^3)), \label{eq:unfold_u_HS2}\\
&\mathcal{T}_\ep(\nabla \vc{u}^\ep) \weak \nabla_x \vc{u}^0 \text{ in } {\rm L}^2(\Omega; {\rm H}^1(Y; \R^{3 \times 3})), \label{eq:unfold_grad_u_HS2} \\
&\mathcal{T}_\ep(\nabla\nabla \vc{u}^\ep) \weak \nabla_x\nabla_x \vc{u}^0 + \nabla_y\nabla_y \hat{\vc{u}} \text{ in } {\rm L}^2(\Omega \times Y; \R^{3 \times 3 \times 3}), \label{eq:unfold_sec_grad_u_HS2} 
\end{align}

and $(\vc{u}^0, \hat{\vc{u}})$ is the unique solution set of, 

\begin{gather}\label{effective:HS2_1}
\begin{aligned}
&\int_{\Omega \times Y} \mathsf{K}(\vc{y}) \nabla_x \vc{u}^0 \two \nabla_x \vc{V}  \, d\vc{y} \, d\vc{x}\\ 
+ 
&\int_{\Omega \times Y} \mathsf{A}(\vc{y}) ( \nabla_x\nabla_x \vc{u}^0 + \nabla_y\nabla_y \hat{\vc{u}}) \three (\nabla_x\nabla_x \vc{V} + \nabla_y\nabla_y \overline{\vc{W}}) \, d\vc{y} \, d\vc{x}\\ 
= &\int_{\Omega \times Y} \vc{g} \one \vc{V} \, d\vc{y} \, d\vc{x},
\end{aligned}
\end{gather}

for all $\vc{V} \in {\rm H}^2_{\Sigma_{\rm 0}}(\Omega, \R^3)$ and $\overline{\vc{W}} \in {\rm L}^2(\Omega; {\rm H}^2(Y; \R^3))$. Furthermore, \eqref{effective:HS2_1} is equivalent to the following,

\begin{align}\label{effective:HS2_2}
&\int_{\Omega} \mean{\mathsf{K}}_Y \nabla_x \vc{u}^0 \two \nabla_x \vc{V} d\vc{x} + \int_{\Omega} \mathsf{A}^{\rm eff} \nabla_x\nabla_x\vc{u}^0 \three \nabla_x\nabla_x \vc{V} \, d\vc{x} = \int_{\Omega} \vc{g} \one \vc{V} \, d\vc{x},
\end{align}

if $\hat{u}_i (\vc{x}, \vc{y}) = \frac{\partial^2 u^0_\alpha (\vc{x})}{\partial_{x_\beta} \partial_{x_\gamma}} w^{\alpha \beta \gamma}_i(\vc{y}) + \kappa_i(\vc{x})$, for $i=1,2,3$, and we select $\overline{\vc{W}} \equiv \vc{0}$. Here, 

\begin{equation}
\mathsf{(A^{\rm eff})}^{{ijk}}_{\alpha \beta \gamma} := \int_Y \mathsf{A}^{ijk}_{n\ell m}(\vc{y}) \left( \delta_{\alpha n} \delta_{\beta m} \delta_{\gamma \ell} + \frac{\partial^2}{\partial_{y_m} \partial_{y_\ell}}w^{\alpha \beta \gamma}_n \right) \, d\vc{y},
\end{equation}

where $\vc{w}^{\alpha \beta \gamma}$ is the unique solution (up to a constant) to,

\begin{gather}\label{hs2:corr_ref}
\left\{
\begin{aligned}
-&{\rm div_y} \left( {\rm div_y} \left(  \mathsf{A} \three \left( \vc{e}_\alpha \otimes \vc{e}_{\beta} \otimes \vc{e}_{\gamma} + \nabla_y\nabla_y \vc{w}^{\alpha \beta \gamma} \right) \right) \right)= \vc{0} \text{ in } Y, \\ 
& \vc{w}^{\alpha \beta \gamma}(\vc{y}) \text{ is } Y-\text{periodic}.
\end{aligned}
\right.
\end{gather}
\end{restatable}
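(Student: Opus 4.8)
The plan is to run the standard unfolding homogenization scheme, which in the \eqref{eq:hs2} regime is a genuine \emph{second-gradient} analogue of classical periodic homogenization: here the leading hyperstress contribution weighted by $\mathsf{A}$ survives at order $\mathcal{O}(1)$, so the corrector lives at the level of the second gradient, while the first gradient unfolds without any $\vc{y}$-corrector. First I would record the compactness, then pass to the two-scale limit with oscillating test functions, and finally separate the cell problem to obtain \eqref{effective:HS2_2}.

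\textbf{Compactness and unfolding limits.} I would start from the uniform estimate \eqref{eq:HS2_estimate}, which gives $\norm{\vc{u}^\ep}_{{\rm H}^2(\Omega,\R^3)} \le c\norm{\vc{g}}_{{\rm L}^2}$ independently of $\ep$. By reflexivity of ${\rm H}^2$ and weak continuity of the trace on $\Sigma_{\rm D}$, a (non-relabelled) subsequence satisfies $\vc{u}^\ep \weak \vc{u}^0$ in ${\rm H}^2$ with $\vc{u}^0 \in {\rm H}^2(\Omega,\R^3)\cap{\rm H}^1_{\Sigma_{\rm D}}(\Omega,\R^3)$. Property \emph{viii} of \propref{prop:properties} then produces $\hat{\vc{u}} \in {\rm L}^2(\Omega;{\rm H}^2_{\rm per}(Y;\R^3))$ and yields exactly \eqref{eq:unfold_u_HS2}--\eqref{eq:unfold_sec_grad_u_HS2}; the decisive structural fact is that the first gradient unfolds with no $\vc{y}$-corrector (property \emph{viii}.b), whereas the second gradient picks up $\nabla_y\nabla_y\hat{\vc{u}}$.

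\textbf{Passage to the limit.} I would test \eqref{eq:weak} with $\vc{v}^\ep(\vc{x}) = \vc{V}(\vc{x}) + \ep^2\vc{W}(\vc{x},\vc{x}/\ep)$, for $\vc{V} \in {\rm H}^2(\Omega,\R^3)\cap{\rm H}^1_{\Sigma_{\rm D}}(\Omega,\R^3)$ and $\vc{W}$ smooth, $Y$-periodic in the second slot and supported away from $\Sigma_{\rm D}$ in $\vc{x}$. A direct computation gives $\partial^2_{x_jx_k}\vc{v}^\ep = \partial^2_{x_jx_k}\vc{V} + (\partial^2_{y_jy_k}\vc{W})(\vc{x},\vc{x}/\ep) + \mathcal{O}(\ep)$, so $\mathcal{T}_\ep(\nabla\nabla\vc{v}^\ep) \to \nabla_x\nabla_x\vc{V} + \nabla_y\nabla_y\vc{W}$ and $\mathcal{T}_\ep(\nabla\vc{v}^\ep) \to \nabla_x\vc{V}$ strongly in ${\rm L}^2(\Omega\times Y)$. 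Rewriting each integral in \eqref{eq:weak} over $\Omega_\ep^-$ via the exact integration identity (property \emph{ii}) and the multiplicativity of $\mathcal{T}_\ep$ (property \emph{i}), and using $\mathcal{T}_\ep(\mathsf{K}(\cdot/\ep)) \to \mathsf{K}(\vc{y})$, $\mathcal{T}_\ep(\mathsf{A}(\cdot/\ep)) \to \mathsf{A}(\vc{y})$ (property \emph{vi}), I would pass to the limit through weak-times-strong products and recover the $\mathsf{K}$- and $\mathsf{A}$-integrals of \eqref{effective:HS2_1}. The two $\mathsf{S}$-coupling terms carry an explicit factor $\ep$ and are bounded by $\ep\,\norm{\mathsf{S}}_{{\rm L}^\infty}\norm{\vc{u}^\ep}_{{\rm H}^2}\norm{\vc{v}^\ep}_{{\rm H}^2} = \mathcal{O}(\ep)$, hence they drop. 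Since $\vc{v}^\ep \to \vc{V}$ in ${\rm L}^2(\Omega)$, the load converges to $\int_\Omega\vc{g}\one\vc{V}$, and a density argument in $(\vc{V},\vc{W})$ delivers \eqref{effective:HS2_1}.

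\textbf{Uniqueness and decoupling.} On the product space $\mathcal{H} := ({\rm H}^2(\Omega,\R^3)\cap{\rm H}^1_{\Sigma_{\rm D}})\times{\rm L}^2(\Omega;{\rm H}^2_{\rm per}(Y;\R^3)/\R^3)$, testing \eqref{effective:HS2_1} with $(\vc{u}^0,\hat{\vc{u}})$ and exploiting $Y$-periodicity to get $\int_Y\nabla_y\nabla_y\hat{\vc{u}}\,d\vc{y}=0$, the cross term vanishes so that $\norm{\nabla_x\nabla_x\vc{u}^0 + \nabla_y\nabla_y\hat{\vc{u}}}^2_{{\rm L}^2(\Omega\times Y)} = \norm{\nabla_x\nabla_x\vc{u}^0}^2_{{\rm L}^2(\Omega)} + \norm{\nabla_y\nabla_y\hat{\vc{u}}}^2_{{\rm L}^2(\Omega\times Y)}$. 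Ellipticity of $\mathsf{K}\in\mathcal{M}^4$ and $\mathsf{A}\in\mathcal{M}^6$, combined with the Poincar\'e inequality \eqref{eq:Poincare} on $\Omega$ and a periodic second-gradient Poincar\'e inequality on $Y$, then yields coercivity on $\mathcal{H}$; Lax--Milgram gives uniqueness of $(\vc{u}^0,\hat{\vc{u}})$ and hence convergence of the whole sequence. To reach \eqref{effective:HS2_2}, I would set $\vc{V}=\vc{0}$ in \eqref{effective:HS2_1} to isolate, for a.e. $\vc{x}$, the cell identity for $\hat{\vc{u}}(\vc{x},\cdot)$; by linearity its solution is $\hat{u}_i = \frac{\partial^2 u^0_\alpha}{\partial x_\beta \partial x_\gamma}w^{\alpha\beta\gamma}_i + \kappa_i(\vc{x})$, with $\vc{w}^{\alpha\beta\gamma}$ the unique (up to a constant) periodic solution of \eqref{hs2:corr_ref}, well-posed again by ellipticity of $\mathsf{A}$ on ${\rm H}^2_{\rm per}(Y)$ modulo affine functions. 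Inserting this ansatz with $\overline{\vc{W}}=\vc{0}$, the $\mathsf{K}$-integral collapses to $\int_\Omega \mean{\mathsf{K}}_Y\,\nabla_x\vc{u}^0\two\nabla_x\vc{V}$ (a bare average, with no corrector) and the $\mathsf{A}$-integral to $\int_\Omega \mathsf{A}^{\rm eff}\,\nabla_x\nabla_x\vc{u}^0\three\nabla_x\nabla_x\vc{V}$ with $\mathsf{A}^{\rm eff}$ as stated; reversing the computation gives the converse implication.

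\textbf{Main obstacle.} Because the uniform ${\rm H}^2$ bound tames the analytic estimates and forces the $\mathsf{S}$-terms to vanish, the genuinely delicate points are: (i) confirming that the oscillating test functions reproduce, after unfolding, precisely the limiting pair $\nabla_x\nabla_x\vc{V} + \nabla_y\nabla_y\vc{W}$ and that admissible $(\vc{V},\vc{W})$ are dense enough to recover the \emph{full} two-scale equation \eqref{effective:HS2_1}; and (ii) establishing coercivity of the limit form on $\mathcal{H}$, which rests on the orthogonality of $\nabla_x\nabla_x\vc{u}^0$ and $\nabla_y\nabla_y\hat{\vc{u}}$ and on a periodic second-gradient Poincar\'e inequality.
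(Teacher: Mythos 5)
Your proposal is correct and follows essentially the same route as the paper's proof: the uniform ${\rm H}^2$ bound \eqref{eq:HS2_estimate} plus property \emph{viii} of \propref{prop:properties} for the convergences, oscillating test functions of order $\ep^2$ to isolate the cell equation, the separation ansatz $\hat{u}_i = \frac{\partial^2 u^0_\alpha}{\partial x_\beta \partial x_\gamma} w^{\alpha\beta\gamma}_i + \kappa_i$, and Lax--Milgram for the corrector problem (where, in the periodic setting, your ``modulo affine functions'' reduces to modulo constants, as in the paper's quotient ${\rm H}^2(Y)/\R$). The only substantive difference is that you combine the two test functions into $\vc{V} + \ep^2\vc{W}(\vc{x},\vc{x}/\ep)$ rather than treating them separately, and you supply an explicit coercivity/uniqueness argument for the two-scale limit form that the paper asserts but does not spell out.
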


The results of \thmref{T2:hs2}, to our knowledge, are new in their entirety. First, the effective problem \eqref{effective:HS2_2} is of second-gradient type where the effective coefficients are computed using the sixth order tensor $\mathsf{A}$ while the fourth order tensor $\mathsf{K}$ is simply averaged over the unit cell $Y$. Moreover, we draw the readers attention to the structure of the corrector problem in \eqref{hs2:corr_ref} and how it differs from the corrector problem in \eqref{hs1:corr_ref}. It is immediate, that problem \eqref{hs2:corr_ref} uses three different unit ``directional" basis vectors $\vc{e}_\alpha, \vc{e}_{\beta}, \vc{e}_{\gamma}$ instead of the usual two unit ``directional" basis vectors as is standard in the classical theory of elasticity. Furthermore, the same regularity properties, as in the first case, are retained in \thmref{T2:hs2} both for $\vc{u}^0$ and the corrector solution.

Lastly, we remark that the vastly different limit problems obtained under the schemes \eqref{eq:hs1} and \eqref{eq:hs2}, respectively, are solely due to the internal lengths, $\ell_{\rm SG}$ and $\ell_{\rm chiral}$, that second-gradient theory introduces. Namely, when the size of the heterogeneities is comparable with the length of the period then we obtain an effective linear elastic material (with higher corrector regularity as a byproduct). When the size of the heterogeneities is comparable with the overall length of the domain (when scale separation is not possible) then the second-gradient effects are retained on the macroscale and the structure of the corrector problem changes considerably. However, the ${\rm H}^2$ regularity of the solution and the corrector is preserved.

\subsection{Proofs of the main results} \label{sec:proofs}
\subsubsection{Proof of \thmref{T1:hs1}}

\hsone*
\begin{proof}
Using \eqref{eq:HS1_estimate} and \propref{prop:properties} $VII.$ we obtain \eqref{eq:unfold_u_HS1}--\eqref{eq:unfold_grad_u_HS1}. To obtain \eqref{eq:unfold_sec_grad_u_HS1} apply \propref{prop:properties} $IX.$ with $\phi_\ep = \nabla \vc{u}^\ep$ and the result follows. 

We now proceed by unfolding \eqref{eq:weak}, under the {\rm HS 1} scheme, and apply \propref{prop:properties} properties $I., II., \text{ and } {\color{red} VI}.$, to obtain,

\begin{align} \label{eq:HS1_unfolded}
\int_{\Omega {\times} Y} &\left( \mathsf{K}_{ijkl}(\vc{y}) \mathcal{T}_\ep (\frac{\partial u^\ep_k}{\partial x_l}) \mathcal{T}_\ep (\frac{ \partial v_i}{\partial x_j}) {+} \ep^2 \mathsf{S}_{ij}^{klm} (\vc{y}) \mathcal{T}_\ep(\frac{\partial^2 u^\ep_k}{\partial x_m \partial x_l}) \mathcal{T}_\ep(\frac{\partial v_i}{\partial x_j}) \right) \, d\vc{y} d\vc{x} \nonumber \\
{+} 
\int_{\Omega {\times} Y} &\Big( \ep^2 \mathsf{A}_{nlm}^{ijk}(\vc{y}) \mathcal{T}_\ep (\frac{\partial^2 u^\ep_n}{\partial x_l \partial x_m}) \mathcal{T}_\ep ( \frac{\partial^2 v_i}{\partial x_j \partial x_k}) \\ \nonumber
&{+} \ep^2 \mathsf{S}_{nl}^{ijk}(\vc{y}) \mathcal{T}_\ep ( \frac{\partial u^\ep_n}{\partial x_l} ) \mathcal{T}_\ep ( \frac{\partial^2 v_i}{\partial x_j \partial x_k}) \Big) \, d\vc{y} d\vc{x}
= \int_{\Omega {\times} Y} \mathcal{T}_\ep (g_i) \mathcal{T}_\ep (v_i) \, d\vc{y} d\vc{x}, \nonumber
\end{align}

Set $\vc{v}:=\vc{V}(\vc{x})$ to be any test function $\vc{V} \in C_0^\infty(\Omega; \R^3)$ in \eqref{eq:HS1_unfolded}. Taking the limit as $\ep \to 0$ and using the properties of the unfolding operator \eqref{eq:unfold_u_HS1}--\eqref{eq:unfold_sec_grad_u_HS1} we obtain,

\begin{gather} 
\begin{aligned}\label{eq:HS1_unfolded_x}
\int_{\Omega {\times} Y} \mathsf{K}(\vc{y}) (\nabla_x \vc{u}^0 {+} \nabla_y \hat{\vc{u}}) \two \nabla_x \vc{V} \, d\vc{y} d\vc{x} = \int_{\Omega \times Y} \vc{g} \cdot \vc{V} \, d\vc{y} d\vc{x},
\end{aligned}
\end{gather}

Select now test functions of the form $\vc{v}=\vc{v}^\ep:=\ep \, U(\vc{x}) \, \vc{W} \left( \frac{\vc{x}}{\ep} \right)$ where $U \in C_0^\infty(\Omega)$ and $\vc{W} \in {\rm H}^2_{\rm per}(Y, \R^3)$. It is clear that $\vc{v}^\ep \to \vc{0}$ in ${\rm L}^2(\Omega, \R^3)$. Moreover, we have,
 
\begin{equation}
\frac{\partial v^\ep_i}{\partial {x_j}} = \ep \frac{\partial U}{\partial x_j}(\vc{x}) W_i(\frac{\vc{x}}{\ep}) {+} U(\vc{x}) \frac{\partial W_i}{\partial y_j}(\frac{\vc{x}}{\ep}),
\end{equation}

\begin{gather}
\begin{aligned}
\frac{\partial^2 v^\ep_i}{\partial x_j \partial x_k} {=} \ep \frac{\partial^2 U}{\partial x_j \partial x_k}(\vc{x}) W_i(\frac{\vc{x}}{\ep}) 
&{+} \frac{\partial U}{\partial x_j}(\vc{x}) \frac{\partial W_i}{\partial y_k}(\frac{\vc{x}}{\ep}) \\ 
&{+} \frac{\partial U}{\partial x_k}(\vc{x}) \frac{\partial W_i}{\partial y_j}(\frac{\vc{x}}{\ep}) {+} \frac{1}{\ep}U(\vc{x}) \frac{ \partial^2 W_i}{\partial y_j \partial y_k}(\frac{\vc{x}}{\ep}).
\end{aligned}
\end{gather}  
 
Thus, as $\ep \to 0$, we have $\mathcal{T}_\ep (v^\ep_i) \to 0$ in ${\rm L}^2(\Omega \times Y)$  $\mathcal{T}_\ep (\partial_{x_j} v^\ep_i) \weak \nabla_y \overline{W}_i(\vc{x},\vc{y})$ in ${\rm L}^2(\Omega \times Y)$, and $\mathcal{T}_\ep (\ep \,\partial^2_{x_j x_k} v^\ep_i) \weak \partial^2_{y_j y_k} \overline{W}_i(\vc{x},\vc{y})$ in ${\rm L}^2(\Omega \times Y)$ where $\overline{W}_i(\vc{x},\vc{y}):=U(\vc{x}) \, W_i(\vc{y})$. Hence, if in the unfolded expression \eqref{eq:HS1_unfolded} use the above test function we obtain,

\begin{gather} 
\begin{aligned}\label{eq:HS1_unfolded_y}
\int_{\Omega \times Y} \mathsf{K}(\vc{y}) (\nabla_x \vc{u}^0 + &\nabla_y \hat{\vc{u}}) \two \nabla_y \overline{\vc{W}} \, d\vc{y} d\vc{x} \\
+
&
\int_{\Omega \times Y} \mathsf{A}(\vc{y}) \nabla_y\nabla_y \hat{\vc{u}} \three \nabla_y\nabla_y \overline{\vc{W}} \, d\vc{y} \, d\vc{x} = 0,
\end{aligned}
\end{gather}

Thus, adding \eqref{eq:HS1_unfolded_x} and \eqref{eq:HS1_unfolded_y} we obtain,

\begin{gather} 
\begin{aligned}\label{eq:HS1_effective}
\int_{\Omega \times Y} \mathsf{K}(\vc{y}) (\nabla_x \vc{u}^0 + &\nabla_y \hat{\vc{u}}) \two (\nabla_x \vc{V} + \nabla_y \overline{\vc{W}}) \, d\vc{y} d\vc{x} \\
+
&
\int_{\Omega \times Y} \mathsf{A}(\vc{y}) \nabla_y\nabla_y \hat{\vc{u}} \three \nabla_y\nabla_y \overline{\vc{W}} \, d\vc{y} \, d\vc{x} = \int_{\Omega \times Y} \vc{g} \cdot \vc{V} \, d\vc{y} d\vc{x},
\end{aligned}
\end{gather}

By the density of $C_0^{\infty}(\Omega) \otimes {\rm H}^2_{\rm per}(Y; \R^3)$ in ${\rm L}^2(\Omega; {\rm H}^2_{\rm per}(Y; \R^3))$ the result holds for all $\overline{\vc{W}}(\vc{x},\vc{y}) \in {\rm L}^2(\Omega; {\rm H}^2_{\rm per}(Y; \R^3))$. 

If in \eqref{eq:HS1_effective} select $\vc{V} = \vc{0}$, then we can see that $\hat{\vc{u}}$ depends \AM{ linearly} on $\nabla_x \vc{u}^0$. Hence, the form of $\hat{\vc{u}}$ looks as follows:

\begin{equation}\label{eq:HS1_hat_u}
\hat{u}_i (\vc{x}, \vc{y}) = \frac{\partial u^0_\alpha}{\partial_{x_\beta}} (\vc{x}) \varphi^{\alpha \beta}_i(\vc{y}) + \kappa_i(\vc{x}),
\end{equation}

where the corrector $\vc{\varphi}^{\alpha\beta}$ is the local solution satisfying \AM{the next boundary-value problem}

\begin{gather} \label{eq:HS1_local1}
\left\{
\begin{aligned}
-&{\rm div_y} \left( \mathsf{K} \two \left( \vc{e}_\alpha \otimes \vc{e}_{\beta} + \nabla_y \vc{\varphi}^{\alpha \beta} \right) - {\rm div_y} \left( \mathsf{A} \three \nabla_y\nabla_y \vc{\varphi}^{\alpha \beta} \right) \right) = \vc{0} \text{ in } Y, \\ 
& \vc{\varphi}^{\alpha \beta}(\vc{y}) \text{ is } Y-\text{periodic}.
\end{aligned}
\right.
\end{gather}

Equivalently, we can formulate \eqref{eq:HS1_local1} in its weak form: Find $\vc{\varphi}^{\alpha\beta} \in {\rm H}^2_{\rm per}(Y,\R^3)$ such that 

\begin{equation}\label{eq:HS1_weak_local}
\int_Y \left( \mathsf{K}\vc{e}_\alpha \otimes \vc{e}_{\beta} \two \nabla_y\vc{\phi} + \mathsf{K} \nabla_y \vc{\varphi}^{\alpha \beta} \two \nabla_y \vc{\phi} + \mathsf{A}\nabla_y \nabla_y \vc{\varphi}^{\alpha \beta} \three \nabla_y \nabla_y \vc{\phi} \right) \, d\vc{y} = 0
\end{equation}

for all $\vc{\phi} \in {\rm H}^2_{\rm per}(Y,\R^3)$. \AM{The} existence and uniqueness (up to a constant) \AM{of a weak solution to}  \eqref{eq:HS1_weak_local} follows from the Lax-Milgram Lemma over the space ${\rm H}^2_{\rm per}(Y, \R^3)$. 

Returning to \eqref{eq:HS1_effective} and substituting $\overline{\vc{W}}=\vc{0}$ and $\hat{\vc{u}}$ from \eqref{eq:HS1_hat_u} we obtain,

\begin{align}
&\int_{\Omega} \mathsf{K}^{\rm eff} \nabla_x \vc{u}^0 \two \nabla_x \vc{V} d\vc{x} = \int_{\Omega} \vc{g} \one \vc{V} \, d\vc{x},
\end{align}

where, 

\begin{equation}
\mathsf{K}^{\rm eff}_{ij\alpha \beta} := \int_Y \mathsf{K}_{ijkl}(\vc{y}) \left( \delta_{\alpha k} \delta_{\beta l} + \frac{\partial}{\partial_{y_l}} \varphi^{\alpha \beta}_k \right) \, d\vc{y}.
\end{equation}

If we define $\sigma^{\rm eff} {:=} \mathsf{K}^{\rm eff} \two \nabla_x \vc{u}^0$ then $\sigma^{\rm eff} = (\sigma^{\rm eff})^\top$ is precisely the Cauchy stress in the theory of classical linear elasticity. This completes the proof.
\end{proof}

\subsubsection{Proof of \thmref{T2:hs2}}

\hstwo*

\begin{proof}
Using \eqref{eq:HS2_estimate} and \propref{prop:properties} $IX.$ we obtain (up to a subsequence) the \AM{convergences stated}  in \eqref{eq:unfold_u_HS2}--\eqref{eq:unfold_sec_grad_u_HS2}. We now proceed by unfolding \eqref{eq:weak}, under the {\rm HS 2} scheme. To this end, we apply \propref{prop:properties} properties $I., II., \text{ and } {\color{red}VI}.$, to obtain

\begin{align} \label{eq:HS2_unfolded}
&\int_{\Omega \times Y} \left( \mathsf{K}_{ijkl}(\vc{y}) \mathcal{T}_\ep (\frac{\partial u^\ep_k}{\partial x_l}) \mathcal{T}_\ep (\frac{ \partial v_i}{\partial x_j}) + \ep \mathsf{S}_{ij}^{klm} (\vc{y}) \mathcal{T}_\ep(\frac{\partial^2 u^\ep_k}{\partial x_m \partial x_l}) \mathcal{T}_\ep(\frac{\partial v_i}{\partial x_j}) \right) \, d\vc{y} d\vc{x} \nonumber \\
+ 
&\int_{\Omega \times Y} \left(  \mathsf{A}_{nlp}^{ijk}(\vc{y}) \mathcal{T}_\ep (\frac{\partial^2 u^\ep_n}{\partial x_l \partial x_p}) \mathcal{T}_\ep ( \frac{\partial^2 v_i}{\partial x_j \partial x_k}) {+} \ep \mathsf{S}_{nl}^{ijk}(\vc{y}) \mathcal{T}_\ep ( \frac{\partial u^\ep_n}{\partial x_l} ) \mathcal{T}_\ep ( \frac{\partial^2 v_i}{\partial x_j \partial x_k}) \right) \, d\vc{y} d\vc{x} \\ \nonumber
= 
&\int_{\Omega \times Y} \mathcal{T}_\ep (g_i) \mathcal{T}_\ep (v_i) \, d\vc{y} d\vc{x}. \nonumber
\end{align}

Set $\vc{v}:=\vc{V}(\vc{x})$ to be any test function $\vc{V} \in C_0^\infty(\Omega; \R^3)$ in \eqref{eq:HS2_unfolded}. Taking the limit as $\ep \to 0$ and using the properties of the unfolding operator \eqref{eq:unfold_u_HS2}--\eqref{eq:unfold_sec_grad_u_HS2} we obtain,

\begin{gather}
\begin{aligned} \label{eq:HS2_unfolded_x}
&\int_{\Omega \times Y} \mathsf{K}(\vc{y}) \nabla_x \vc{u}^0 \two \nabla_x \vc{V}  \, d\vc{y} \, d\vc{x}\\ 
+ 
&\int_{\Omega \times Y} \mathsf{A}(\vc{y}) ( \nabla_x\nabla_x \vc{u}^0 + \nabla_y\nabla_y \hat{\vc{u}}) \three \nabla_x\nabla_x \vc{V}  \, d\vc{y} \, d\vc{x}\\ 
= &\int_{\Omega \times Y} \vc{g} \one \vc{V} \, d\vc{x}.
\end{aligned}
\end{gather}

Select now test functions of the form $\vc{v}=\vc{v}^\ep:=\ep^2 \, U(\vc{x}) \, \vc{W} \left( \frac{\vc{x}}{\ep} \right)$ where $U \in C_0^\infty(\Omega)$ and $\vc{W} \in {\rm H}^2_{\rm per}(Y, \R^3)$. \AM{We note} that $\vc{v}^\ep \to \vc{0}$ in ${\rm L}^2(\Omega, \R^3)$. Moreover, we have
 
\begin{equation}
\frac{\partial v^\ep_i}{\partial {x_j}} = \ep^2 \frac{\partial U}{\partial x_j}(\vc{x}) W_i(\frac{\vc{x}}{\ep}) {+} \ep U(\vc{x}) \frac{\partial W_i}{\partial y_j}(\frac{\vc{x}}{\ep}),
\end{equation}

\begin{gather}
\begin{aligned}
\frac{\partial^2 v^\ep_i}{\partial x_j \partial x_k} {=} \ep^2 \frac{\partial^2 U}{\partial x_j \partial x_k}(\vc{x}) W_i(\frac{\vc{x}}{\ep}) 
&{+} \ep \frac{\partial U}{\partial x_j}(\vc{x}) \frac{\partial W_i}{\partial y_k}(\frac{\vc{x}}{\ep}) \\ 
&{+} \ep \frac{\partial U}{\partial x_k}(\vc{x}) \frac{\partial W_i}{\partial y_j}(\frac{\vc{x}}{\ep}) {+} U(\vc{x}) \frac{ \partial^2 W_i}{\partial y_j \partial y_k}(\frac{\vc{x}}{\ep}).
\end{aligned}
\end{gather}  
 
Thus, as $\ep \to 0$, it yields $\mathcal{T}_\ep (\partial_{x_j} v^\ep_i) \to 0$ in ${\rm L}^2(\Omega \times Y)$ and $\mathcal{T}_\ep (\partial^2_{x_j x_k} v^\ep_i) \to \partial^2_{y_j y_k} \overline{W}_i(\vc{x},\vc{y})$ in ${\rm L}^2(\Omega \times Y)$ for $\overline{W}_i(\vc{x},\vc{y}):=U(\vc{x}) \, W_i(\vc{y})$. Hence, we use the above test functions in the unfolded expression \eqref{eq:HS2_unfolded} to obtain, 

\begin{gather}
\begin{aligned} \label{eq:HS2_unfolded_y}
\int_{\Omega \times Y} \mathsf{A}(\vc{y}) ( \nabla_x\nabla_x \vc{u}^0 + \nabla_y\nabla_y \hat{\vc{u}}) \three  \nabla_y\nabla_y \overline{\vc{W}} \, d\vc{y} \, d\vc{x} = 0.
\end{aligned}
\end{gather}

Adding \eqref{eq:HS2_unfolded_x} and \eqref{eq:HS2_unfolded_y}, we obtain,

\begin{gather}\label{eq:HS2_effective}
\begin{aligned}
&\int_{\Omega \times Y} \mathsf{K}(\vc{y}) \nabla_x \vc{u}^0 \two \nabla_x \vc{V}  \, d\vc{y} \, d\vc{x}\\ 
+ 
&\int_{\Omega \times Y} \mathsf{A}(\vc{y}) ( \nabla_x\nabla_x \vc{u}^0 + \nabla_y\nabla_y \hat{\vc{u}}) \three (\nabla_x\nabla_x \vc{V} + \nabla_y\nabla_y \overline{\vc{W}}) \, d\vc{y} \, d\vc{x}\\ 
= &\int_{\Omega} \vc{g} \one \vc{V} \, d\vc{x},
\end{aligned}
\end{gather}

Once again, by the density of $C_0^{\infty}(\Omega) \otimes {\rm H}^2_{\rm per}(Y; \R^3)$ in ${\rm L}^2(\Omega; {\rm H}^2_{\rm per}(Y; \R^3))$ the result holds for all $\overline{\vc{W}}(\vc{x},\vc{y}) \in {\rm L}^2(\Omega; {\rm H}^2_{\rm per}(Y; \R^3))$. 

\AM{Proceeding in a similar fashion as for the case} {\rm HS 1}, if  we select in \eqref{eq:HS2_effective} $\vc{V} = \vc{0}$, then we can see that $\hat{\vc{u}}$ depends  linearly on $\nabla_x \nabla_x \vc{u}^0$. Hence, the structure of $\hat{\vc{u}}$ looks as follows,

\begin{equation}\label{eq:HS2_hat_u}
\hat{u}_i (\vc{x}, \vc{y}) = \frac{\partial^2 u^0_\alpha}{\partial_{x_\beta} \partial_{x_\gamma}}(\vc{x}) w^{\alpha \beta \gamma}_i(\vc{y}) + {\rm P}_i(\vc{x}),
\end{equation}

where ${\rm P}_i(\vc{x})$ is a linear polynomial in the variable $y$ and the corrector $\vc{w}^{\alpha\beta\gamma}$ is the local solution satisfying the following problem,

\begin{gather}\label{eq:HS2_local1}
\left\{
\begin{aligned}
-&{\rm div_y} \left( {\rm div_y} \left(  \mathsf{A} \three \left( \vc{e}_\alpha \otimes \vc{e}_{\beta} \otimes \vc{e}_{\gamma} + \nabla_y\nabla_y \vc{w}^{\alpha \beta \gamma} \right) \right) \right)= \vc{0} \text{ in } Y, \\ 
& \vc{w}^{\alpha \beta \gamma}(\vc{y}) \text{ is } Y-\text{periodic}.
\end{aligned}
\right.
\end{gather}

Equivalently, we can formulate \eqref{eq:HS2_local1} in its weak form: Find $\vc{w}^{\alpha\beta\gamma} \in {\rm H}^2_{\rm per}(Y,\R^3)$ such that,

\begin{gather}
\begin{aligned}
\int_Y \left(  \mathsf{A} \vc{e}_\alpha \otimes \vc{e}_{\beta} \otimes \vc{e}_{\gamma} \three \nabla_y \nabla_y \vc{\xi} +  \mathsf{A} \nabla_y\nabla_y \vc{w}^{\alpha \beta \gamma} \three \nabla_y \nabla_y \vc{\xi} \right) \, d\vc{y} = 0.
\end{aligned}
\end{gather}

The existence and uniqueness (up to a rigid displacement) of a weak solution follows based on the Lax-Milgram Lemma. This is straightforward as the Poincar\'e's inequality holds for the quotient space ${\rm H}^2(Y) / \mathcal{P}$, where we designate $\mathcal{P}$ to be the space of linear polynomials (see, e.g. \cite{necas1967methodes}).

We return now to \eqref{eq:HS2_effective}. Substituting $\overline{\vc{W}}=\vc{0}$ and $\hat{\vc{u}}$ from \eqref{eq:HS2_hat_u} we obtain,

\begin{align}
&\int_{\Omega} \mean{\mathsf{K}}_Y \nabla_x \vc{u}^0 \two \nabla_x \vc{V} d\vc{x} + \int_{\Omega} \mathsf{A}^{\rm eff} \nabla_x\nabla_x\vc{u}^0 \three \nabla_x\nabla_x \vc{V} \, d\vc{x} = \int_{\Omega} \vc{g} \one \vc{V} \, d\vc{x},
\end{align}

where,

\begin{equation}
\mean{\mathsf{K}}_Y {:=} \int_{Y} \mathsf{K}(\vc{y}) \, d\vc{y},
\end{equation}

\begin{equation}
\mathsf{(A^{\rm eff})}^{{ijk}}_{\alpha \beta \gamma} := \int_Y \mathsf{A}^{ijk}_{n\ell p}(\vc{y}) \left( \delta_{\alpha n} \delta_{\beta p} \delta_{\gamma \ell} + \frac{\partial^2}{\partial_{y_p} \partial_{y_\ell}}w^{\alpha \beta \gamma}_n \right) \, d\vc{y}.
\end{equation}

\AM{This completes} the proof.
\end{proof}

\begin{remn}
The coefficient $\mathsf{A}^{\rm eff}$ is precisely the coefficient provided phenomenologically by references \cite{ME68}, \cite{Germain73}, however, in our case it is exactly computable based on volume fraction and morphology of the microstructure.  
\end{remn}

\subsubsection{Recovery of an effective second-gradient theory}

The statement of \thmref{T2:hs2} \AM{points out a key aspect -- we are dealing macroscopically with} a second-gradient material (see \eqref{effective:HS2_2}). In this section, we derive the associated \AM{partial differential equations} with its boundary conditions in the sense of distributions and show that they form a complete set of equillibrium equations for the second-gradient theory of \cite{ME68} equivalent to the system given by \cite{Germain73}.

We begin with,

\begin{align} \label{hs2:weak}
&\int_{\Omega} \mean{\mathsf{K}}_Y \nabla_x \vc{u}^0 \two \nabla_x \vc{V} d\vc{x} + \int_{\Omega} \mathsf{A}^{\rm eff} \nabla_x\nabla_x\vc{u}^0 \three \nabla_x\nabla_x \vc{V} \, d\vc{x} = \int_{\Omega} \vc{g} \one \vc{V} \, d\vc{x}
\end{align}

and set

\begin{equation}
\sigma^{\rm eff}_{pq} {:=} \mean{\mathsf{K}_{pqij}} \frac{\partial u^0_i}{\partial x_j}, \quad \mu^{\rm eff}_{pqr} {:=} (\mathsf{A}^{\rm eff})^{pqr}_{ijk} \frac{\partial^2 u^0_i}{\partial x_j \partial x_k}. 
\end{equation}

Then \eqref{hs2:weak} becomes,

\begin{align}
&\int_{\Omega} \sigma^{\rm eff}_{pq} \frac{\partial V_p}{\partial x_q} \, d\vc{x} + \int_{\Omega} \mu^{\rm eff}_{pqr} \frac{\partial^2 V_p}{\partial x_r \partial x_q} \, d\vc{x} = \int_{\Omega} g_p V_p \, d\vc{x}.
\end{align}

Integrating by parts the first term once and the second term twice, we obtain,

\begin{align}
\int_{\Sigma} (\sigma^{\rm eff}_{pq} - \partial_{x_r} \mu^{\rm eff}_{pqr}) n_q  V_p \, ds - &\int_{\Omega} \partial_{x_q} (\sigma^{\rm eff}_{pq} - \partial_{x_r}\mu^{\rm eff}_{pqr}) V_p \, d\vc{x} \nonumber \\
+&\int_{\Sigma} \mu^{\rm eff}_{pqr} n_r \partial_{x_q} V_p \, ds= \int_{\Omega} g_p V_p \, d\vc{x}.
\end{align}

As before,  we decompose \AM{the boundary term} into normal and tangential components via,

\begin{equation}
\int_{\Sigma} \mu^{\rm eff}_{pqr} n_r \partial_{x_q} V_p \, ds = \int_{\Sigma} \mu^{\rm eff}_{pqr} n_q n_r n_l \partial_{x_l} V_p \, ds + \int_{\Sigma} \mu^{\rm eff}_{pqr} n_r \Pi_{lq} \partial_{x_l} V_p \, ds. 
\end{equation}

The first component of the above formula is a normal double traction while the second term we integrate by parts (on the surface $\Sigma$) using \eqref{eq:surface_parts} and obtain,

\begin{equation}
\int_{\Sigma} \mu^{\rm eff}_{pqr} n_r \Pi_{lq} \partial_{x_l} V_p \, ds = -\int_{\Sigma} \Pi_{ml} \partial_{x_l}(\mu^{\rm eff}_{pqr} n_r \Pi_{mq}) V_p \, ds 
+ \int_{\partial \Sigma} \jump{\mu^{\rm eff}_{pqr} n_r \nu_p}  V_p \, d\ell. 
\end{equation}

Thus, putting everything together, we have that \eqref{hs2:weak} is equivalent to the following identity:

\begin{gather}
\begin{aligned}
\int_{\Sigma} ((\sigma^{\rm eff}_{pq} - &\partial_{x_r} \mu^{\rm eff}_{pqr}) n_q - \Pi_{ml} \partial_{x_l}(\mu^{\rm eff}_{pqr} n_r \Pi_{mq}) )V_p \, ds - \int_{\Omega} \partial_{x_q} (\sigma^{\rm eff}_{pq} - \partial_{x_r}\mu^{\rm eff}_{pqr}) V_p \, d\vc{x} \\
+&\int_{\Sigma} \mu^{\rm eff}_{pqr} n_q n_r n_l \partial_{x_l} V_p \, ds + \int_{\partial \Sigma} \jump{\mu^{\rm eff}_{pqr} n_r \nu_p}  V_p \, d\ell = \int_{\Omega} g_p V_p \, d\vc{x}.
\end{aligned}
\end{gather}

From the above equation, we can recover the following boundary conditions on $\Sigma$ and $\partial \Sigma$,

\begin{itemize}
\item[-] Surface traction: $(\sigma^{\rm eff}_{pq} - \partial_{x_r} \mu^{\rm eff}_{pqr}) n_q - \Pi_{ml} \partial_{x_l}(\mu^{\rm eff}_{pqr} n_r \Pi_{mq})  = 0$ on $\Sigma_{\rm 1}$,
\item[-] A normal double traction: $\mu^{\rm eff}_{pqr} n_q n_r = 0$ on $\Sigma_{\rm 1}$,
\item[-] A line traction: $\jump{\mu^{\rm eff}_{pqr} n_r \nu_p}=0$ on $\partial \Sigma_{\rm 1}$,
\item[-] $\vc{u}^0 = \vc{0}$ and $\nabla \vc{u}^0 = \vc{0}$ on $\Sigma_{\rm 0}$ (the boundary conditions condition are a-priori in the function space),
\end{itemize}

which, jointly with the field equations, 

\begin{equation}
-\partial_{x_q} (\sigma^{\rm eff}_{pq} - \partial_{x_r}\mu^{\rm eff}_{pqr}) = g_p \text{ in } \mathcal{D}(\Omega),
\end{equation} 

build the complete set of equations governing equilibrium states for the second-gradient theory of reference \cite{ME68}, \cite{Germain73}.

\section*{Acknowledgements} 
The authors gratefully acknowledge the financial support by the Knowledge Foundation (project nr. KK 2020-0152). Moreover, we would like to express our gratitude to the anonymous reviewers for their many comments, suggestions, and corrections.
\small
\bibliography{ref2.bib}

\appendix \label{sec:appendix}
\numberwithin{equation}{section}
\section{Taylor expansion of the stored energy function around the equilibrium}

We perform a Taylor expansion of the stored energy function around the equilibrium. In principle we can continue this expansion and obtain any desired degree of accuracy of the nonlinear energy $W$. However, using the scaling introduce previously, we keep only the terms up to $\mathcal{O}(\alpha^3)$ leading to,

\begin{align*}
{\rm W} \left ( \vc{x}, \mathbb{F}, \mathbb{G},  \right ) 
= 
& {\rm {\rm W}} \left ( \vc{x}, \mathbb{I}, \mathbb{0} \right ) + 
\frac{ \partial {\rm W} }{ \partial F_{ij}} \left ( \vc{x}, \mathbb{I}, \mathbb{0} \right ) (F_{ij} - \delta_{ij}) 
+ \frac{ \partial {\rm W} }{ \partial G_{ijk}} \left ( \vc{x}, \mathbb{I}, \mathbb{0} \right ) \partial_{x_k} F_{ij} \\
+ & \frac{1}{2}\frac{ \partial^2 {\rm W} }{ \partial F_{ij} \partial F_{k\ell}} \left ( \vc{x}, \mathbb{I}, \mathbb{0} \right ) (F_{ij} - \delta_{ij})(F_{k\ell} - \delta_{k\ell}) \\
+ & \frac{ \partial^2 {\rm W} }{ \partial F_{ij} \partial G_{k\ell m}} \left ( \vc{x}, \mathbb{I}, \mathbb{0} \right ) (F_{ij} - \delta_{ij}) \partial_{x_m} F_{k\ell} \\
+ & \frac{1}{2} \frac{ \partial^2 {\rm W} }{ \partial G_{ijk} \partial G_{m\ell p}} \left ( \vc{x}, \mathbb{I}, \mathbb{0} \right ) \partial_{x_k} F_{ij} \partial_{x_p} F_{m\ell}
+ \mathcal{O}(\alpha^3).
\end{align*}

The potential energy at the equilibrium configuration is zero and, moreover, we assume that the material is stress free at the equilibrium configuration. Hence, the above expansion reduces to the following,

\begin{align*}
W \left ( \vc{x}, \mathbb{F}, \mathbb{G} \right ) 
= & 
\frac{1}{2}\frac{ \partial^2 W }{ \partial F_{ij} \partial F_{k\ell}} \left ( \vc{x}, \mathbb{I}, \mathbb{0} \right ) (F_{ij} - \delta_{ij})(F_{k\ell} - \delta_{k\ell}) \\
+ & 
\frac{ \partial^2 W }{ \partial F_{ij} \partial G_{k\ell m}} \left ( \vc{x}, \mathbb{I}, \mathbb{0} \right ) (F_{ij} - \delta_{ij}) \partial_{x_m} F_{k\ell} \\
+ & \frac{1}{2} \frac{ \partial^2 W }{ \partial G_{ijk} \partial G_{m\ell p}} \left ( \vc{x}, \mathbb{I}, \mathbb{0} \right ) \partial_{x_k} F_{ij} \partial_{x_p} F_{m\ell} + \mathcal{O}(\alpha^3).
\end{align*}

\subsection{Mechanical constitutive law for the stress and hyperstress up to $\mathcal{O}(\alpha^2)$}

The first constitutive law for the stress can be obtained from the above energy the following way,

\begin{equation*}
\sigma {=} \frac{\partial {\rm W}}{\partial \mathbb{F}} (\vc{x},\mathbb{F}, \mathbb{G}).
\end{equation*}

In components we have,

\begin{align*}
\sigma_{ij} 
& = \frac{ \partial^2 {\rm W} }{ \partial F_{ij} \partial F_{k\ell}} \left ( \vc{x}, \mathbb{I}, \mathbb{0} \right ) (F_{k\ell} - \delta_{k\ell}) + \frac{ \partial^2 W }{ \partial F_{ij} \partial G_{k\ell m}} \left ( \vc{x}, \mathbb{I}, \mathbb{0} \right ) \partial_{x_m} F_{k\ell} + \mathcal{O}(\alpha^{2}).
\end{align*}

Set,

\begin{align*}
\mathsf{K}_{ijk\ell} &{:=} \frac{ \partial^2 {\rm W} }{ \partial F_{ij} \partial F_{k\ell}} \left ( \vc{x}, \mathbb{I}, \mathbb{0} \right ), \quad
\mathsf{S}_{ij}^{k\ell m} {:=} \frac{ \partial^2 {\rm W} }{ \partial F_{ij} \partial G_{k \ell m}} \left ( \vc{x}, \mathbb{I}, \mathbb{0} \right ).
\end{align*}

In more compact form we can write,

\begin{equation}
\sigma_{ij} {=} \mathsf{K}_{ijk\ell} \frac{\partial u_k}{\partial x_\ell} + \mathsf{S}_{ij}^{k \ell m} \frac{\partial^2 u_k}{\partial x_m \partial x_\ell}.
\end{equation}

The constitutive law for the hyperstress can be obtained,

\begin{equation*}
\mu {=} \frac{\partial {\rm W}}{\partial \mathbb{G}} (\vc{x},\mathbb{F}, \mathbb{G}).
\end{equation*}

In components we obtain,

\begin{equation}
\mu_{ijk} = \frac{ \partial^2 {\rm W} }{ \partial F_{n\ell} \partial G_{ijk}} \left ( \vc{x}, \mathbb{I}, \mathbb{0} \right ) (F_{n\ell} - \delta_{n\ell}) + 
\frac{ \partial^2 {\rm W} }{ \partial G_{nk\ell} \partial G_{ijk}} \left ( \vc{x}, \mathbb{I}, \mathbb{0} \right ) \partial_{x_\ell}F_{nk} + \mathcal{O}(\alpha^{2}).
\end{equation}

If we set,

\begin{equation}
\mathsf{A}^{ijk}_{n\ell p} {:=} \frac{ \partial^2 {\rm W} }{ \partial G_{n\ell p} \partial G_{ijk}} \left ( \vc{x}, \mathbb{I}, \mathbb{0} \right ),
\end{equation}

then we can compactly write,

\begin{equation}
\mu_{ijk} {=} \mathsf{A}^{ijk}_{n\ell p} \frac{\partial^2 u_n}{\partial x_\ell \partial x_p} + \mathsf{S}_{n\ell}^{ijk} \frac{\partial u_n}{\partial x_\ell}.
\end{equation}

\end{document}